\newcommand{\tfa}{time-frequency analysis}
\newcommand{\ft}{Fourier transform}
\newcommand{\stft}{short-time Fourier transform}
\newcommand{\tf}{time-frequency}
\newcommand{\tfs}{time-frequency shift}
\newcommand{\aw}{Anti-Wick}
\newcommand{\aaf}{A_a^{\varphi_1,\varphi_2}}
\newcommand{\modsp}{modulation space}
\newtheorem{tm}{Theorem}[section]
\newtheorem{lemma}[tm]{Lemma}
\newtheorem{theorem}{Theorem}[section]
\newtheorem{corollary}[theorem]{Corollary}
\newtheorem{proposition}[theorem]{Proposition}
\newtheorem{remark}[theorem]{Remark}
\newcommand{\beqa}{\begin{eqnarray*}}
\newcommand{\eeqa}{\end{eqnarray*}}
\newcommand{\field}[1]{\mathbb{#1}}
\newcommand{\bR}{\field{R}}        
\def\la{\lambda}
 \def\cF{\mathcal{F}}              
 \def\cS{\mathcal{S}}
 \def\cU{\mathcal{U}}
\def\gaw{A_a^{\f_1,\f_2}}
\def\vgf{V_gf}
\def\rd{\bR^d}
\def\rdd{{\bR^{2d}}}
\def\lrd{L^2(\rd)}
\def\lrdd{L^2(\rdd)}
\def\Qs{{Q_s}}
\def\intrd{\int_{\rd}}
\def\intrdd{\int_{\rdd}}
\def\R{\right)}
\def\<{\left<}
\def\>{\right>}
\def\mv1{M_v^1}
\def\Mmpq{M_m^{p,q}}
\def\phas{(x,\xi )}
\def\o{\xi}
\def\z{\zeta}
\def\R{\mathbb{R}}
\def\Ren{\mathbb{R}^d}
\def\Renn{\mathbb{R}^{2d}}
\def\Qs{{Q_s}}
\def\sch{\mathcal{S}}
\def\Fur{\mathcal{F}}
\def\f{\varphi}
\def\gaw{A_a^{\f_1,\f_2}}
\def\Sn2{S_{2}(L^{2}(\Ren))}
\def\S1{S_{1}(L^{2}(\Ren))}
\def\sig00{\sigma_{0,0}}
\def\la{\langle}
\def\ra{\rangle}
\begin{document}


\title[Sharp integral bounds for  Wigner distributions]{Sharp integral bounds for Wigner distributions}
\author{Elena Cordero}
\address{Dipartimento di Matematica, Universit\`a di Torino, Dipartimento di
Matematica, via Carlo Alberto 10, 10123 Torino, Italy}
\email{elena.cordero@unito.it}
\thanks{}
\author{Fabio Nicola}
\address{Dipartimento di Scienze Matematiche, Politecnico di Torino, corso
Duca degli Abruzzi 24, 10129 Torino, Italy}
\email{fabio.nicola@polito.it}
\thanks{}

\address{Dipartimento di Matematica,
Universit\`a di Torino, via Carlo Alberto 10, 10123 Torino, Italy}
\address{Dipartimento di Scienze Matematiche,
Politecnico di Torino, corso Duca degli Abruzzi 24, 10129 Torino,
Italy}

\email{elena.cordero@unito.it}
\email{fabio.nicola@polito.it}

\subjclass[2010]{42B10,81S30,42B37,35S05} \keywords{Wigner distribution, time-frequency
representations, modulation spaces,  Wiener amalgam spaces}
\date{}

\begin{abstract}
The  cross-Wigner distribution $W(f,g)$ of two functions or temperate distributions $f,g$ is a fundamental tool in quantum mechanics and in signal analysis. Usually, in applications in time-frequency analysis $f$ and $g$ belong to some modulation space and it is  important to know  which modulation spaces $W(f,g)$ belongs to. Although several particular sufficient conditions have been appeared in this connection, the general problem remains open. In the present paper we solve completely this issue by providing the full range of modulation spaces in which the continuity of the cross-Wigner distribution $W(f,g)$ holds, as a function of $f,g$. The case of weighted modulation spaces is also considered. The consequences of our results are manifold:  new bounds for the short-time Fourier transform and the  ambiguity function, boundedness results for pseudodifferential (in particular, localization) operators and properties of the Cohen class.
\end{abstract}

\maketitle

\section{Introduction}

The (cross-)Wigner distribution was first introduced in physics to account for quantum corrections to classical statistical mechanics in 1932 by  Wigner \cite{Wigner32} and in 1948 it was  proposed in signal analysis by Ville \cite{Ville}. This is why the Wigner distribution is also called  Wigner-Ville distribution. Nowadays it can be considered one of the most important time-frequency representations, second only to the spectrogram, and it is one of the most commonly used quasiprobability distribution in quantum mechanics \cite{Birkbis,book}. 

Given two functions $f_1,f_2\in
L^2(\Ren)$, the {\it cross-Wigner distribution} $W(f_1,f_1)$  is defined to be
\begin{equation}
\label{eq3232}
W(f_1,f_2)(x,\o)=\int f_1(x+\frac{t}2)\overline{f_2(x-\frac{t}2)} e^{-2\pi
	i\o t}\,dt.
\end{equation}
The quadratic expression $Wf = W(f,f)$ is called the Wigner
distribution of $f$. 

An important issue related to such a distribution is the continuity of the map $(f_1,f_2)\mapsto W(f_1,f_2)$ in the relevant Banach spaces. The basic result in this connection is the easily verified equality
\[
\|W(f_1,f_2)\|_{L^2(\rdd)}=\|f_1\|_{L^2(\rd)}\|f_2\|_{L^2(\rd)}.
\]
Beside $L^2$, the time-frequency concentration of signals is often measured by the so-called modulation space norm $M^{p,q}_{m}$, $1\leq p,q\leq\infty$,  for a suitable weight function $m$ (cf. \cite{feichtinger80,feichtinger83,book} and Section $2$ below).
In short, these spaces are defined as follows. For a fixed non-zero $g \in \cS (\rd )$, the \stft\ (STFT) of $f \in
\cS ' (\rd ) $ with respect to the window $g$ is given by
\begin{equation}
\label{eqi2}
V_gf(x,\o)=\int_{\Ren}
f(t)\, {\overline {g(t-x)}} \, e^{-2\pi i\o t}\,dt\, .
\end{equation}
Then the space $M^{p,q}_{m}(\rd)$ is defined by
\[
M^{p,q}_{m}(\rd)=\{f\in \cS'(\rd):\ V_g f\in L^{p,q}_{m}(\rdd)\}
\]
endowed with the obvious norm. Here $L^{p,q}_{m}(\rdd)$ are mixed-norm weighted Lebesgue spaces in $\rdd$; see Section 2 below for precise definitions.\par

Both the STFT  $\vgf $ and the cross-Wigner distribution  $W(f,g)$ are defined on many pairs
of Banach spaces.  For example, they both map $L^2(\rd ) \times
L^2(\rd )$ into $\lrdd $ and  $\sch(\Ren)\times\sch(\Ren)$ into
$\sch(\Renn)$ and  can be extended  to a map from
$\sch'(\Ren)\times\sch'(\Ren)$ into $\sch'(\Renn)$.\par
In this paper we will mainly work with the polynomial weights
\begin{equation}
v_s(z)=\la z\ra^s=(1+|z|^2)^{\frac s2},\quad z\in\rdd,\quad s\in\bR.
\end{equation} 
For $w=(z,\zeta)\in\bR^{4d}$, we write $(1\otimes v_s)(w)=v_s(\zeta)$. Now, the problem addressed in this paper is to provide the full range of exponents $ p_1,p_2,q_1,q_2,p,q\in [1,\infty]$ such that 
\[
\|W(f_1,f_2)\|_{M^{p,q}_{1\otimes v_s}}\lesssim \|f_1\|_{M^{p_1,q_1}_{v_s}} \|f_2\|_{M^{p_2,q_2}_{v_s}}.
\]
 These estimates were proved in  \cite[Theorem 4.2]{Toftweight} (cf.\ also \cite[Theorem 4.1]{toft1} for \modsp s without weights) under the conditions 
\[
p\leq p_i,q_i\leq q, \quad \ i=1,2\]
 and
   \begin{equation}\label{Wigindex}
    \frac1{p_1}+\frac1{p_2}=\frac1{q_1}+\frac1{q_2}=\frac1{p}+\frac1{q}.
   \end{equation}
However, it is not clear whether these conditions are necessary as well. \par
Our main result shows that the sufficient conditions can be widened and such  extension is sharp.
\begin{theorem} \label{T1} Assume $p_i,q_i,p,q\in [1,\infty]$, $s\in \bR$, such that
	\begin{equation}\label{WIR}
	p_i,q_i\leq q,  \ \quad i=1,2
	\end{equation}
	and that
	\begin{equation}\label{Wigindexsharp}
	\frac1{p_1}+\frac1{p_2}\geq \frac1{p}+\frac1{q},\quad \frac1{q_1}+\frac1{q_2} \geq \frac1{p}+\frac1{q}.
	\end{equation}
	Then,
	if $f_1\in M^{p_1,q_1}_{v_{|s|}}(\Ren)$ and
	$f_2\in M^{p_2,q_2}_{v_s}(\Ren)$ we have  $W(f_1,f_2)\in
	M^{p,q}_{1\otimes v_s}(\Renn)$, and
	\begin{equation}\label{wigest}
	\| W(f_1,f_2)\|_{M^{p,q}_{1\otimes v_s}}\lesssim
	\|f_1\|_{M^{p_1,q_1}_{v_{|s|}}}\| f_2\|_{M^{p_2,q_2}_{v_s}}.
	\end{equation}
	
	\par
	Viceversa, assume that there exists a constant $C>0$ such that
	\begin{equation}\label{Wigestsharp}
	\|W(f_1,f_2)\|_{M^{p,q}}\leq C \|f_1\|_{M^{p_1,q_1}} \|f_2\|_{M^{p_2,q_2}},\quad \forall f_1,f_2\in\cS(\rdd).
	\end{equation}
	Then \eqref{WIR} and \eqref{Wigindexsharp} must hold.
\end{theorem}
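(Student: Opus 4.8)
The plan is to prove the two implications by genuinely different means: the forward estimate through an exact \tf\ factorization of $W(f_1,f_2)$, and the converse through explicit families of Gaussians and their \tfs s. For \eqref{wigest} I would \emph{not} try to reduce to the earlier sufficient conditions of \cite{Toftweight}, because \eqref{WIR}--\eqref{Wigindexsharp} strictly enlarge the admissible range — for instance $p_1=p_2=q_1=q_2=q$ is allowed and then \eqref{Wigindexsharp} only forces $p\geq q$, whereas Toft's hypotheses would collapse to $p=q$. Instead I would compute the STFT of $W(f_1,f_2)$ directly. The key device is the factorization of the STFT of a cross-Wigner distribution with respect to the Gaussian window $\Phi=W(\varphi_1,\varphi_2)$: there is a fixed linear isomorphism $\mathcal{A}$ of $\Refn$ with $|V_\Phi W(f_1,f_2)(z,\zeta)|=|V_{\varphi_1}f_1(\mathcal{A}_1(z,\zeta))|\,|V_{\varphi_2}f_2(\mathcal{A}_2(z,\zeta))|$. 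Substituting this into the definition of the $M^{p,q}_{1\otimes v_s}$-norm reduces \eqref{wigest} to a weighted mixed-norm bound for a product of two short-time Fourier transforms over $\Refn$.

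The next step is to dispose of the weight. Since $\zeta$ is an affine function of the two STFT arguments through $\mathcal{A}$, I would apply Peetre's inequality in the form $\langle\zeta\rangle^{s}\lesssim\langle\mathcal{A}_2(z,\zeta)\rangle^{s}\langle\mathcal{A}_1(z,\zeta)\rangle^{|s|}$; this is precisely what produces the asymmetric weights $v_{|s|}$ on $f_1$ and $v_s$ on $f_2$ in \eqref{wigest}. After the change of variables $\mathcal{A}$ the problem becomes the purely quantitative estimate $\big\||V_{\varphi_1}f_1|\,|V_{\varphi_2}f_2|\big\|_{L^{p,q}}\lesssim\|V_{\varphi_1}f_1\|_{L^{p_1,q_1}}\|V_{\varphi_2}f_2\|_{L^{p_2,q_2}}$ on $\Refn$. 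I would carry this out by Hölder's inequality in the phase-space position variables together with Minkowski's integral inequality and the elementary inclusions of mixed-norm spaces in the phase-space frequency variables. The requirement that the inner $L^{p_i}$ and $L^{q_i}$ norms may be pulled through an outer $L^q$ norm is exactly $p_i,q_i\leq q$, i.e.\ \eqref{WIR}, while balancing the exponents in Hölder's inequality after the substitution $\mathcal{A}$ yields the summation conditions \eqref{Wigindexsharp}. The step I expect to be most delicate is keeping track of how $\mathcal{A}$ entangles position and frequency, so that the order of the norm rearrangements is legitimate and reproduces \eqref{WIR} and \eqref{Wigindexsharp} rather than some stronger or weaker constraint.

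For the converse I would test \eqref{Wigestsharp} on Gaussians and their \tfs s. To force \eqref{Wigindexsharp}, take $f_1=f_2=\phi_\lambda$ with $\phi_\lambda(x)=e^{-\pi\lambda|x|^2}$: a direct Gaussian integral gives $W\phi_\lambda(x,\omega)\asymp\lambda^{-d/2}e^{-2\pi\lambda|x|^2-2\pi|\omega|^2/\lambda}$, an anisotropic dilation of a fixed phase-space Gaussian that is contracted by $\sqrt\lambda$ in $x$ and dilated by $\sqrt\lambda$ in $\omega$. Consequently all three norms in \eqref{Wigestsharp} are exact powers of $\lambda$, computable from the known dilation asymptotics of modulation norms. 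Letting $\lambda\to\infty$ and $\lambda\to0$ and matching the powers yields one of the inequalities in \eqref{Wigindexsharp}; the other then follows for free from the Fourier-transform covariance of the Wigner distribution together with $\|\widehat f\|_{M^{p,q}}\asymp\|f\|_{M^{q,p}}$, which interchanges the roles of the $p_i$'s and the $q_i$'s.

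To obtain \eqref{WIR} I would instead use finitely many well-separated \tfs s of a single Gaussian. Taking $f_1=\sum_{k=1}^{N}T_{a_k}\phi$ with widely spaced $a_k$ and $f_2=\phi$ gives $\|f_1\|_{M^{p_1,q_1}}\asymp N^{1/p_1}$, while the pieces $W(T_{a_k}\phi,\phi)(x,\omega)=e^{-2\pi i a_k\omega}W\phi(x-a_k/2,\omega)$ are essentially disjoint bumps lying along a phase-space diagonal, separated both in a position and in a frequency variable, whose $M^{p,q}(\Renn)$-norm grows like $N^{1/q}$; letting $N\to\infty$ forces $p_1\leq q$. Replacing translations by modulations $f_1=\sum_k M_{\omega_k}\phi$ produces $N^{1/q_1}$ on the right and again $N^{1/q}$ on the left, giving $q_1\leq q$, and the roles of $f_1,f_2$ are symmetric. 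The point requiring the most care throughout the converse — and the main obstacle — is the accurate evaluation of the mixed $M^{p,q}(\Renn)$-norms: in the dilation argument one must handle the anisotropic Gaussian whose position and frequency directions scale oppositely, and in the translation/modulation argument one must verify that $N$ disjoint bumps arranged diagonally in phase space contribute the clean factor $N^{1/q}$ rather than $N^{1/p}$, since it is exactly this that singles out $q$ as the controlling index in \eqref{WIR}.
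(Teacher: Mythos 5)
Your sufficiency argument is essentially the paper's own proof: the factorization you call $\mathcal{A}$ is Lemma \ref{STFTSTFT} (i.e.\ \cite[Lemma 14.5.1]{book}), the Peetre splitting of the weight is exactly how $v_{|s|}$ and $v_s$ get distributed in the paper too, and your ``H\"older plus Minkowski'' step is precisely Young's inequality for mixed-norm spaces \cite{BP61,Galperin2014}, which the paper applies and then relaxes via inclusion relations --- first to pass from equalities to the inequalities \eqref{Wigindexsharp}, then to handle indices smaller than $p$, and finally to treat the cases $p>q$ and $p=q=\infty$ separately. One correction to your formulation: you cannot ``change variables by $\mathcal{A}$'' inside a mixed norm, because $L^{p,q}$ is not invariant under linear maps that entangle the inner variable $z$ with the outer variable $\zeta$. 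The legitimate move (and the paper's) is to substitute $z\mapsto z-\tilde{\zeta}/2$ in the inner integral only, at fixed $\zeta$, which identifies it as the convolution $(|V_gf_2|^p\ast|(V_gf_1)^*|^p)(\tilde{\zeta})$; the outer integral is then an $L^{q/p}_{v_{ps}}$ norm of this convolution (this is where $q\ge p$ enters) and Young applies. You flagged this entanglement as the delicate step; the convolution identity is its resolution. Likewise your dilation family $\phi_\lambda$ is the paper's test for \eqref{Wigindexsharp}; note that the two limits $\lambda\to\infty$ and $\lambda\to 0^+$ already give \emph{both} inequalities (the Wigner contracts in $x$ exactly as it dilates in $\omega$), so your appeal to $\|\widehat f\|_{M^{p,q}}\asymp\|f\|_{M^{q,p}}$ is unnecessary --- fortunately, since as a general identity it is false: $\mathcal{F}(M^{p,q})=W(\mathcal{F}L^p,L^q)\neq M^{q,p}$ unless $p=q$, although the asymptotic does hold for your particular Gaussians.

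Where you genuinely depart from the paper is the necessity of \eqref{WIR}, and your route is sound and more elementary. The paper tests \eqref{Wigestsharp} on the cross-Wigner $W(\varphi,\varphi_\lambda)$ of two Gaussians of different widths; this is a chirped Gaussian whose $M^{p,q}$ norm must be computed in closed form (Proposition \ref{2tesi} and Corollary \ref{ee7}), after which $\lambda\to\infty$ gives $q_2\le q$, $\lambda\to 0^+$ gives $p_2\le q$, and the conjugation symmetry $W(\varphi_\lambda,\varphi)=\overline{W(\varphi,\varphi_\lambda)}$ gives the remaining two conditions. Your counting argument replaces all of this: since $W(T_{a_k}\phi,\phi)(x,\omega)=e^{-2\pi i a_k\omega}\,W\phi(x-a_k/2,\omega)$, the $N$ pieces are separated in the \emph{outer} ($\zeta$) variable of $M^{p,q}(\mathbb{R}^{2d})$, and for bumps separated in the outer variable the mixed norm is $\asymp N^{1/q}$ whatever $p$ is --- your diagonal-separation claim is correct, because at each fixed $\zeta$ only one bump is active. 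Against $\|f_1\|_{M^{p_1,q_1}}\asymp N^{1/p_1}$ (resp.\ $N^{1/q_1}$ for modulations) this forces $p_1\le q$ (resp.\ $q_1\le q$), and symmetry in $f_1,f_2$ handles the rest. What remains to make this rigorous is only standard almost-orthogonality bookkeeping: for a fixed large separation the cross terms must be controlled uniformly in $N$, both in the upper bound for $\|f_1\|_{M^{p_1,q_1}}$ and in the lower bound $\gtrsim N^{1/q}$ on the Wigner side. Your approach avoids the generalized-Gaussian STFT computation entirely; the paper's pays that price once but obtains the explicit asymptotic \eqref{ee4}, which it then reuses elsewhere.
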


The remarkable fact of this result, in our opinion, is that the conditions \eqref{WIR} and \eqref{Wigindexsharp} turn out to be necessary too. \par\medskip

The consequences of this are manifold. First, in the framework of signal analysis and time-frequency representations, we obtain new estimates for  the short-time Fourier transform  $V_{f_1} f_2$ and the ambiguity function $A(f_1,f_2)$ (see Section 2 for definitions). In particular, we recapture the sharp Lieb's bounds in \cite[Theorem 1]{Lieb}
$$\|A(f_1,f_2)\|_{L^q}\lesssim \|f_1\|_{L^2}\|f_2\|_{L^2},$$
valid for  $q\geq 2$; we also refer to \cite{cordero1,cordero2} for related estimates for the short-time Fourier transform and \cite{CDNACHA2016} for the strictly related Born-Jordan distribution. \par

Secondly, we easily provide new boundedness results for pseudodifferential operators (in particular, localization operators) with symbols in modulation spaces. Let us mention that
the study of pseudodifferential operators in the context of modulation spaces
has been pursued by many authors. The earliest works are due to Sj\"ostrand \cite{Sjostrand1} and Tachizawa \cite{Tachizawa1}. In the former work  pseudodifferential operators with symbols in the modulation space $M^{\infty,1}$ (also called Sj\"ostrand's class) where investigated. Later, sufficient and some necessary boundedness conditions where investigate by Gr\"ochenig and Heil \cite{GH99,GH04} and Labate \cite{Labate1,Labate2}. Since the year 2003 until today the contributions on this topic are so multiplied that there is hard to mention them all. Let us just recall some of them \cite{benyi,benyi2,CTWJFA2014,Czaja,grstrohmer,KobSug2011,ptt2,ptt1,strohmer,sugitomita2,toft1,Toftweight}.

Every continuous operator from $\cS (\rd )$ to $\cS ' (\rd )$ can be
represented as a pseudodifferential operator in the  Weyl form $L_\sigma$  and the connection with the cross-Wigner distribution is provided  by
 \begin{equation}\label{equiv1}
 \la L_\sigma f,g\ra=\la \sigma,W(g,f)\ra,\quad\quad f,g\in\sch(\Ren).
 \end{equation}
 By using this formula we can translate the boundedness results for the cross-Wigner distribution in Theorem \ref{T1} to boundedness results for Weyl operators. 

Pseudodifferential operators of great interest in signal analysis are the so-called  localization operators $\gaw$ (see Section \ref{psdo}), which can be represented as  Weyl operators as follows (cf. \cite{BCG2004,CG02,Nenadmed2016})
 \begin{equation}\label{WA}\gaw=L_{a\ast W(\f_2,\f_1)}
  \end{equation}  so that the Weyl symbol of the localization operator $\aaf$
is given by
\begin{equation}\label{eq2}
\sigma = a\ast W(\f_2,\f_1)\, .
\end{equation}
Using this representation of localization operators as Weyl operators and using Theorem \ref{T1} we are able to obtain new boundedness results for localization operators, see Theorem \ref{tempbound} in Section \ref{psdo} below.

Finally, another application of Theorem \ref{T1} is the investigation of the time-frequency properties of the Cohen class, introduced by
Cohen in \cite{Cohen1}. This class consists of elements of the type 
\begin{equation}\label{Cohenclass}
M(f,f)=W(f,f)\ast \sigma
\end{equation}%
where $\sigma \in \mathcal{S}^{\prime }({\mathbb{R}^{2d}})$ is called the
Cohen kernel. When $\sigma =\delta $, then $M(f,f)=W(f,f)$ and we come back
to the Wigner distribution. For other choices of kernels we recapture  the Born-Jordan distribution \cite{BJinv,CDNACHA2016,CGNtfa} or the $\tau $-Wigner
distributions  $W_{\tau }(f,f)$ \cite[Proposition 5.6]{bogetal}.
In this framework we have the following result.
\begin{theorem}\label{cohenbound}
	Assume $s\geq0$, $p_1,q_1,p,q \in [1,\infty]$ such that
	\begin{equation}\label{Ch}
 2\min \{ \frac 1{p_1},\frac 1{q_1}\}\geq \frac 1p +\frac 1 q.
	\end{equation}
	Consider a  Cohen kernel  $\sigma \in M^{1,\infty}(\rdd)$. If $f\in M^{p_1,q_1}_{v_s}(\rd)$, then the Cohen distribution  $M(f,f)$ is in $ M^{p,q}_{1\otimes v_{s}}(\rdd)$, with
	\begin{equation}\label{cohenM}
	\|M(f,f)\|_{M^{p,q}_{1\otimes v_{s}}(\rdd)}\lesssim \|\sigma\|_{M^{1,\infty}(\rdd)} \|f\|^2_{M^{p_1,q_1}_{v_s}(\rd)}.
	\end{equation}
	\end{theorem}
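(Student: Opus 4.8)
The plan is to exploit the representation $M(f,f)=W(f,f)\ast\sigma$ from \eqref{Cohenclass} and to decouple the two ingredients: first absorb the Cohen kernel $\sigma$ by a convolution estimate in modulation spaces, thereby reducing the problem to a bound on the pure Wigner distribution $W(f,f)$, and then invoke Theorem \ref{T1} on the diagonal $f_1=f_2=f$. Since $s\geq 0$ we have $v_{|s|}=v_s$, so the two windows in Theorem \ref{T1} carry the same weight and its hypotheses simplify considerably.

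First I would establish the convolution step
\[
\|W(f,f)\ast\sigma\|_{M^{p,q}_{1\otimes v_s}}\lesssim \|W(f,f)\|_{M^{p,q}_{1\otimes v_s}}\,\|\sigma\|_{M^{1,\infty}},
\]
that is, that convolution by an element of $M^{1,\infty}$ is bounded on $M^{p,q}_{1\otimes v_s}(\rdd)$. The point worth isolating is that an \emph{unweighted} $\sigma$ suffices even though the target carries the weight $1\otimes v_s$: this weight acts only on the frequency variable $\zeta$ of the phase space $\rdd$, whereas the convolution acts on the conjugate (position) variable. Concretely, since $\widehat{W(f,f)\ast\sigma}=\widehat{W(f,f)}\,\widehat{\sigma}$ and $M^{1,\infty}\hookrightarrow \cF L^\infty$, the frequency weight $v_s(\zeta)$ is controlled by that of $W(f,f)$ alone, while the $M^{1,\infty}$-norm governs the position spreading; this is precisely the (weighted) convolution relation recorded in Section~2. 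With this in hand the problem reduces to proving $\|W(f,f)\|_{M^{p,q}_{1\otimes v_s}}\lesssim\|f\|^2_{M^{p_1,q_1}_{v_s}}$.

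For the latter I would apply Theorem \ref{T1} with $f_1=f_2=f$, output exponents $(p,q)$, and common weight $v_s$. Its index condition \eqref{Wigindexsharp} then collapses to $2\min\{1/p_1,1/q_1\}\geq 1/p+1/q$, which is exactly \eqref{Ch}, and \eqref{wigest} yields the quadratic bound with the correct weight $1\otimes v_s$. The main obstacle is the remaining hypothesis \eqref{WIR} of Theorem \ref{T1}, which here reads $p_1,q_1\leq q$: this is the binding constraint, and it is not implied by \eqref{Ch} in general, since \eqref{Ch} only gives $\max\{p_1,q_1\}\leq 2pq/(p+q)$, whence $p_1,q_1\leq q$ precisely when $p\leq q$. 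Thus the delicate part of the argument is the bookkeeping that matches \eqref{Ch} to the full set of hypotheses of Theorem \ref{T1}: when $p\leq q$ everything closes immediately, while in the complementary range one must either supplement \eqref{Ch} with $p_1,q_1\leq q$ or control the diagonal estimate for $W(f,f)$ by a separate argument. The convolution step cannot be used to relax this, because convolving with $M^{1,\infty}$ forces the intermediate frequency exponent to remain $\leq q$, so together with \eqref{WIR} it still demands $\max\{p_1,q_1\}\leq q$.
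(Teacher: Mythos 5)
Your proposal is, step for step, the paper's own proof of Theorem \ref{cohenbound}: the paper applies Theorem \ref{T1} on the diagonal $f_1=f_2=f$ to place $W(f,f)$ in $M^{p,q}_{1\otimes v_s}(\rdd)$ and then uses the convolution relation $M^{p,q}_{1\otimes v_s}\ast M^{1,\infty}\subseteq M^{p,q}_{1\otimes v_s}$ from Proposition \ref{mconvmp}; performing the two steps in the opposite order, as you do, changes nothing essential.

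The ``main obstacle'' you isolate is, however, a genuine gap --- and it is a gap in the paper's own argument as well, since the paper invokes Theorem \ref{T1} on the strength of \eqref{Ch} alone, without checking \eqref{WIR}. Your bookkeeping is correct: \eqref{Ch} says $\min\{1/p_1,1/q_1\}\geq\tfrac12(1/p+1/q)$, and $\tfrac12(1/p+1/q)\geq 1/q$ exactly when $p\leq q$, so \eqref{WIR} follows from \eqref{Ch} only in that range. Moreover, for $p>q$ the missing case cannot be closed by any separate argument, because the statement itself then fails. Indeed $\delta\in M^{1,\infty}(\rdd)$ (with Gaussian window $\Phi$ one has $|V_\Phi\delta(z,\zeta)|=|\Phi(-z)|$), and $W(f,f)\ast\delta=W(f,f)$, so \eqref{cohenM} with $s=0$ and $\sigma=\delta$ is exactly the diagonal bound $\|W(f,f)\|_{M^{p,q}}\lesssim\|f\|^2_{M^{p_1,q_1}}$. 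By polarization, $4W(f,g)=\sum_{k=0}^{3}i^k\,W(f+i^kg,\,f+i^kg)$, combined with the invariance $W(tf,t^{-1}g)=W(f,g)$ for $t>0$, this diagonal bound implies the bilinear bound \eqref{Wigestsharp} with $p_2=p_1$, $q_2=q_1$; the necessity part of Theorem \ref{T1} then forces $p_1,q_1\leq q$. Concretely, $p=\infty$, $q=2$, $p_1=q_1=4$ satisfies \eqref{Ch}, yet Corollary \ref{ee7} gives $\|W(\f,\f_\lambda)\|_{M^{\infty,2}}\asymp\lambda^{-d/4}$ while $\|\f\|_{M^{4}}\|\f_\lambda\|_{M^{4}}\asymp\lambda^{-d/8}$ as $\lambda\to 0^+$. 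So the correct hypothesis is \eqref{Ch} together with $p_1,q_1\leq q$ (the latter being automatic when $p\leq q$); under that hypothesis your proof, like the paper's, is complete, and your caution about the complementary range is fully warranted.
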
 
In particular, the $\tau$-kernels and the Born-Jordan kernels enjoy such a property, cf. Section \ref{Cohensection}.
\par\medskip
For the sake of clarity our results have been presented only for the polynomial weights $v_s$, but we remark that more general weights can also be considered, see the following Remark \ref{pesi}, $(i)$.\par

Further developments of this research could involve the study of boundedness for bilinear/multilinear pseudodifferential operators and localization operators. This requires an extension of Theorem \ref{T1} to more general  Wigner/Rihaczek distributions and STFT, see e.g., \cite{benjiKasso,COmultiloc,GT2002} and the recent contribution \cite{MOP2016}. We leave this study to a subsequent paper.

In short, the paper is organized as follows.  Section 2 is devoted to some preliminary results from time-frequency analysis and in particular to the computation of the STFT of a generalized Gaussian and its modulation norm. In Section 3 we prove Theorem \ref{T1}. In Section 4 we show the continuity properties of pseudodifferential (and in particular localization) operators on modulation spaces. In Section 5 we present a time-frequency analysis of the Cohen class. 
\par\medskip

\textbf{Notation.} We define $t^2=t\cdot t$, for $t\in\Ren$, and
$xy=x\cdot y$ is the scalar product on $\Ren$. The Schwartz class is denoted by  $\sch(\Ren)$, the space of tempered
distributions by  $\sch'(\Ren)$.   We use the brackets  $\la
f,g\ra$ to denote the extension to $\sch' (\Ren)\times\sch (\Ren)$ of
the inner product $\la f,g\ra=\int f(t){\overline {g(t)}}dt$ on
$L^2(\Ren)$. The Fourier transform is normalized to be ${\hat
	{f}}(\o)=\Fur f(\o)=\int
f(t)e^{-2\pi i t\o}dt$, the involution $g^*$ is $g^*(t) =
\overline{g(-t) }$. The operators of translation and modulation are defined by $T_x f(t)=f(t-x)$ and $M_\o f(t)=e^{2\pi i t\o} f(t)$, $x,\o\in\rd$.

\section{Preliminaries}

\subsection{Modulation spaces}
The modulation and Wiener amalgam space norms are a  measure
of
the joint time-frequency distribution of $f\in \sch '$. For their
basic properties we refer to the original literature \cite{feichtinger80,feichtinger83,feichtinger90} and the textbooks \cite{Birkbis,book}.


For the  description of the  decay  properties of a function/distribution, 
 weight functions  on the \tf\ plane are employed. We denote by  $v$  a
continuous, positive,  even, submultiplicative  weight function (in short, a
submultiplicative weight), i.e., $v(0)=1$, $v(z) = v(-z)$, and
$ v(z_1+z_2)\leq v(z_1)v(z_2)$, for all $z, z_1,z_2\in\Renn.$
A positive, even weight function $m$ on $\Renn$ is called  {\it
  v-moderate} if
$ m(z_1+z_2)\leq Cv(z_1)m(z_2)$  for all $z_1,z_2\in\Renn.$
Observe that $v_s$ is a $v_{|s|}$-moderate weight, for every $s\in\bR$.
Given a non-zero window $g\in\sch(\Ren)$, a $v$-moderate weight
function $m$ on $\Renn$, $1\leq p,q\leq
\infty$, the {\it
  modulation space} $M^{p,q}_m(\Ren)$ consists of all tempered
distributions $f\in\sch'(\Ren)$ such that the STFT $V_gf$ (defined in \eqref{eqi2}) is in $L^{p,q}_m(\Renn )$
(weighted mixed-norm spaces), with norm 
$$
\|f\|_{M^{p,q}_m}=\|V_gf\|_{L^{p,q}_m}=\left(\int_{\Ren}
  \left(\int_{\Ren}|V_gf(x,\o)|^pm(x,\o)^p\,
    dx\right)^{q/p}d\o\right)^{1/q}.  \,
$$
(Obvious modifications occur  when $p=\infty$ or $q=\infty$). If $p=q$, we write $M^p_m$ instead of $M^{p,p}_m$, and if $m(z)\equiv 1$ on $\Renn$, then we write $M^{p,q}$ and $M^p$ for $M^{p,q}_m$ and $M^{p,p}_m$.
Then  $\Mmpq (\Ren )$ is a Banach space
whose definition is independent of the choice of the window $g$, in the sense that different  nonzero window functions yield equivalent  norms.
The modulation space $M^{\infty,1}$ is also called Sj\"ostrand's class \cite{Sjostrand1}.


%
\par
We now recall the definition of the Wiener amalgam spaces that are image of the modulation spaces under the \ft.
For any  \emph{even} weigh functions $u,w$ on $\rd$, the Wiener amalgam spaces $W(\Fur L^p_u,L^q_w)(\rd)$ are given by the distributions $f\in\cS'(\rd)$ such that
\[
\|f\|_{W(\Fur L^p_u,L^q_w)(\rd)}:=\left(\int_{\Ren}
  \left(\int_{\Ren}|V_gf(x,\o)|^p u^p(\o)\,
    d\o\right)^{q/p} w^q(x)d x\right)^{1/q}<\infty  \,
\]
(with natural changes for $p=\infty$ or $q=\infty$).
Using Parseval identity we can write the so-called fundamental identity of \tfa\, $V_g f(x,\o)= e^{-2\pi i x\o}V_{\hat g} \hat f(\o,-x)$, hence $|V_g f(x,\o)|=|V_{\hat g} \hat f(\o,-x)|$  so that (recall $u(x)=u(-x)$)
\begin{equation}\label{MW}
\| f \|_{{M}^{p,q}_{u\otimes w}} = \| \hat f \|_{W(\cF L_u^p,L_w^q)}.
\end{equation}
This proves that these Wiener amalgam spaces are  the image under \ft\, of modulation spaces:
\begin{equation}\label{W-M}
\cF ({M}^{p,q}_{u\otimes w})=W(\cF L_u^p,L_w^q).
\end{equation}

In the sequel we will need the inclusion relations for modulation spaces. Assume $m_1, m_2 \in \mathcal{M}_{v}(\rdd)$, then
\begin{equation}\label{modspaceincl1}
\begin{aligned}
& \cS (\rd) \subseteq M_{m_1}^{p_1,q_1} (\rd) \subseteq M_{m_2}^{p_2,q_2} (\rd) \subseteq
\sch'(\rd), \\
& {\rm if}\qquad  p_1 \leq p_2, \quad q_1 \leq q_2, \quad m_2 \lesssim
m_1.
\end{aligned}
\end{equation}
Moreover, we will often apply convolution relations for modulation spaces \cite[Proposition 2.1]{CG02} for the $v_s$ weight functions as follows.
\begin{proposition}\label{mconvmp}
	Let $\nu (\o )>0$ be  an arbitrary  weight function on $\Ren$, $s\in\bR$,   and  $1\leq
	p,q,u,v,t\leq\infty$. If
	$$\frac1p+\frac1q-1=\frac1u,\quad \,\, \text{ and } \,
	\quad\frac1t+\frac1{t'}=\frac 1v\, ,$$
	then
	\begin{equation}\label{mconvm}
	M^{p,t}_{1\otimes \nu}(\Ren)\ast  M^{q,t'}_{1\otimes
		v_{|s|}\nu^{-1}}(\Ren)\hookrightarrow M^{u,v}_{v_s}(\Ren)
	\end{equation}
	with  norm inequality  $\| f\ast h \|_{M^{u,v}_{v_s}}\lesssim
	\|f\|_{M^{p,t}_{1\otimes \nu}}\|h\|_{ M^{q,t'}_{1\otimes
			v_{|s|}\nu^{-1}}}$.
\end{proposition}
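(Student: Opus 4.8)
The plan is to reduce the inclusion to the behaviour of the STFT under convolution, followed by one application of Young's inequality in the space variable and one of H\"older's inequality in the frequency variable. Since the \modsp\ norm does not depend on the chosen window, I am free to evaluate all STFTs with a window that factors as a convolution, say $\Phi=\vf_1\ast\vf_2$ with $\vf_1,\vf_2$ two Gaussians (so that $\Phi$ is again a nonzero Schwartz function). With this choice a direct computation---writing out the multiple integrals defining both sides and performing a change of variables---yields the identity
\[
V_\Phi(f\ast h)(x,\o)=\int_{\Ren}V_{\vf_1}f(z,\o)\,V_{\vf_2}h(x-z,\o)\,dz ,
\]
whose decisive feature is that the \emph{same} frequency $\o$ enters both factors, while the space variable gets convolved. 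Taking moduli gives the pointwise bound
\[
|V_\Phi(f\ast h)(x,\o)|\le\big(|V_{\vf_1}f(\cdot,\o)|\ast|V_{\vf_2}h(\cdot,\o)|\big)(x).
\]

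Next I would freeze $\o$ and, using $\frac1p+\frac1q-1=\frac1u$, apply Young's inequality in $x$:
\[
\big\|V_\Phi(f\ast h)(\cdot,\o)\big\|_{L^u(\Ren)}\le \big\|V_{\vf_1}f(\cdot,\o)\big\|_{L^p(\Ren)}\,\big\|V_{\vf_2}h(\cdot,\o)\big\|_{L^q(\Ren)}.
\]
It then remains to take the weighted $L^v$ norm in $\o$ (recall that all the weights in the statement, namely $\nu$, $v_{|s|}\nu^{-1}$ and $v_s$, are functions of the frequency variable). The elementary inequality $\langle\o\rangle^{s}\le\langle\o\rangle^{|s|}$, valid for every sign of $s$, lets me write $v_s(\o)\le v_{|s|}(\o)=\nu(\o)\cdot\big(v_{|s|}(\o)\nu(\o)^{-1}\big)$, and distributing these two factors onto the two norms above, a generalized H\"older inequality in $\o$ with $\frac1t+\frac1{t'}=\frac1v$ produces
\[
\|f\ast h\|_{M^{u,v}_{1\otimes v_s}}\le \Big\|\,\|V_{\vf_1}f(\cdot,\o)\|_{L^p}\,\nu(\o)\Big\|_{L^t_\o}\Big\|\,\|V_{\vf_2}h(\cdot,\o)\|_{L^q}\,v_{|s|}(\o)\nu(\o)^{-1}\Big\|_{L^{t'}_\o}.
\]
The two factors on the right are exactly $\|f\|_{M^{p,t}_{1\otimes\nu}}$ and $\|h\|_{M^{q,t'}_{1\otimes v_{|s|}\nu^{-1}}}$, which is the desired estimate \eqref{mconvm}; one first argues for $f,h\in\sch(\Ren)$ and then extends by density.

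The computations behind the STFT identity and the two functional inequalities are routine, so the point demanding care---and the one I would regard as the heart of the argument---is the weight bookkeeping. Everything hinges on the observation that, because convolution couples the two STFTs at a common frequency, the frequency weights of the two factor spaces simply \emph{multiply}; choosing them to be $\nu$ and $v_{|s|}\nu^{-1}$ makes this product equal to $v_{|s|}$, which dominates the target weight $v_s$ uniformly. This is precisely the $v_{|s|}$-moderateness of $v_s$ at work, and it also explains why the auxiliary weight $\nu$ may be left completely arbitrary: it cancels in the product and never needs to be specified.
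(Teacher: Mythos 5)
Your proof is correct and is essentially the same argument as the source's: the paper states Proposition \ref{mconvmp} without proof, quoting it from Cordero--Gr\"ochenig, and the proof there is exactly yours --- the factored-window identity $V_{\varphi_1\ast\varphi_2}(f\ast h)(x,\omega)=\bigl(V_{\varphi_1}f(\cdot,\omega)\ast V_{\varphi_2}h(\cdot,\omega)\bigr)(x)$, followed by Young's inequality in $x$ and weighted H\"older's inequality in $\omega$.

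You also resolved the one genuinely delicate point correctly: all weights, including the target weight $v_s$, must be read as functions of the frequency variable alone, so that the conclusion is an estimate in $M^{u,v}_{1\otimes v_s}$; this is precisely the form in which the paper invokes the proposition in the proofs of Theorems \ref{tempbound} and \ref{cohenbound}. Had one instead read $v_s$ as the paper's full time--frequency weight on $\Renn$, the embedding would in fact be false for $s>0$, since neither factor space carries any weight in the time variable and convolution cannot create time decay (for instance, $f=\sum_{k}2^{-k}T_{x_k}\varphi\in M^{1,1}$ with $x_k\to\infty$ rapidly and $h=\varphi$ a Gaussian gives $f\ast h\notin M^{1,1}_{v_s}$, although $h\in M^{1,\infty}_{1\otimes v_s}$); so your frequency-only bookkeeping is not merely sufficient but forced.
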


\subsection{Time-frequency tools} To prove our main result, we will need to compute the STFT of the cross-Wigner distribution, proved in \cite[Lemma 14.5.1]{book}: 
\begin{lemma}\label{STFTSTFT}
Fix a nonzero  $g \in \cS (\Ren ) $ and let  $\Phi=W (g , g ) \in\sch(\Renn)$. Then the STFT of $W(f _1,
f _2) $ with respect to the window $\Phi $ is given by
\begin{equation}
\label{eql4}
{ {V}}_\Phi (W(f_1,f_2)) (z, \zeta ) =e^{-2\pi i
	z_2\z_2}{\overline{V_{g
		}f_2(z_1+\frac{\z_2}2,z_2-\frac{\z_1}2})}V_{g
}f_1(z_1-\frac{\z_2}2,z_2+\frac{\z_1}2)\, .
\end{equation}	
\end{lemma}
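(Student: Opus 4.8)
The plan is to compute the STFT directly, writing it as an inner product and then exploiting two classical facts about the Wigner distribution: its covariance under phase-space time-frequency shifts, and Moyal's identity. First I would write
\[
{V}_\Phi(W(f_1,f_2))(z,\zeta)=\la W(f_1,f_2),\, M_\zeta T_z\Phi\ra,
\]
where $M_\zeta T_z$ now denotes the time-frequency shift acting on functions on $\Renn$ and $\Phi=W(g,g)$. The strategy is to recognize the shifted window $M_\zeta T_z\Phi$ as itself being (up to a constant) a cross-Wigner distribution of two time-frequency shifted copies of $g$, so that the whole expression becomes a pairing of two Wigner distributions, to which Moyal's formula applies.

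The key step is a covariance computation. Evaluating $W(M_{\om_1}T_{x_1}g,\,M_{\om_2}T_{x_2}g)(x,\om)$ directly from \eqref{eq3232}, I would pull the modulation and translation factors out of the integral and perform the change of variable $t\mapsto t-(x_1-x_2)$ to restore the symmetric integrand of the defining formula. This shows that $W(M_{\om_1}T_{x_1}g,\,M_{\om_2}T_{x_2}g)$ equals $W(g,g)$ recentred at $\bigl(\tfrac{x_1+x_2}2,\tfrac{\om_1+\om_2}2\bigr)$, times a phase that is affine in $(x,\om)$. Comparing this with $M_\zeta T_z W(g,g)$, matching the centre with $z$ and the two linear frequencies with $\zeta$ forces
\[
(x_1,\om_1)=\Bigl(z_1-\tfrac{\zeta_2}2,\ z_2+\tfrac{\zeta_1}2\Bigr),\qquad (x_2,\om_2)=\Bigl(z_1+\tfrac{\zeta_2}2,\ z_2-\tfrac{\zeta_1}2\Bigr),
\]
while the remaining constant part of the phase reduces to $e^{-2\pi i z_2\zeta_2}$; hence $M_\zeta T_z W(g,g)=e^{2\pi i z_2\zeta_2}\,W(M_{\om_1}T_{x_1}g,\,M_{\om_2}T_{x_2}g)$ with these parameters.

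Finally I would insert this identity into the pairing and apply Moyal's identity $\la W(f_1,f_2),W(h_1,h_2)\ra=\la f_1,h_1\ra\,\overline{\la f_2,h_2\ra}$ with $h_i=M_{\om_i}T_{x_i}g$. Recognizing $\la f_1,M_{\om_1}T_{x_1}g\ra=V_gf_1(x_1,\om_1)$ and $\overline{\la f_2,M_{\om_2}T_{x_2}g\ra}=\overline{V_gf_2(x_2,\om_2)}$, and noting that the conjugate-linearity of the second slot turns the factor $e^{2\pi i z_2\zeta_2}$ back into $e^{-2\pi i z_2\zeta_2}$, I would arrive at
\[
{V}_\Phi(W(f_1,f_2))(z,\zeta)=e^{-2\pi i z_2\zeta_2}\,V_gf_1\Bigl(z_1-\tfrac{\zeta_2}2,\,z_2+\tfrac{\zeta_1}2\Bigr)\,\overline{V_gf_2\Bigl(z_1+\tfrac{\zeta_2}2,\,z_2-\tfrac{\zeta_1}2\Bigr)},
\]
which is precisely \eqref{eql4}.

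The main obstacle is the bookkeeping in the covariance step: one must track the interplay of the two translations, the two modulations, and the change of variables carefully enough to recover simultaneously the correct shifted arguments of the two short-time Fourier transforms and the exact overall phase $e^{-2\pi i z_2\zeta_2}$. Everything else is a routine application of Moyal's formula together with the definition of the STFT; the computation is valid for Schwartz $f_1,f_2$ and extends to $\sch'(\Ren)$ by the usual duality.
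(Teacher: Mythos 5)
Your proof is correct, and it is essentially the standard argument: the paper does not prove this lemma itself but cites \cite[Lemma 14.5.1]{book}, where the proof proceeds exactly as you do, by recognizing $M_\zeta T_z W(g,g)$ as a cross-Wigner distribution of time-frequency shifted copies of $g$ (via the covariance property) and then applying Moyal's orthogonality relation. Your parameter matching $(x_1,\om_1)=(z_1-\tfrac{\zeta_2}2,z_2+\tfrac{\zeta_1}2)$, $(x_2,\om_2)=(z_1+\tfrac{\zeta_2}2,z_2-\tfrac{\zeta_1}2)$ and the phase bookkeeping yielding $e^{-2\pi i z_2\zeta_2}$ are accurate.
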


\begin{lemma}\label{1tesi}
Consider the Gaussian function $\varphi(x)=e^{-\pi x^2}$ and its rescaled version $\varphi_\lambda(x)=e^{-\pi\lambda x^2}$, $\lambda>0$. Then the cross-Wigner distribution  is the following Gaussian function
\begin{equation}\label{Wgausslambda}
 W(\varphi,\varphi_\lambda)(x,\xi)=\frac{2^d}{(1+\lambda)^{\frac d2}} e^{-a_\lambda\pi x^2}e^{-b_\lambda\pi \xi^2}e^{2\pi ic_\lambda x\xi}
 \end{equation}
with \begin{equation}\label{ee3}
a_\lambda=\frac{4\lambda}{1+\lambda}\quad b_\lambda=\frac{4}{1+\lambda}\quad  c_\lambda=\frac{2(1-\lambda)}{1+\lambda}.
\end{equation}
\end{lemma}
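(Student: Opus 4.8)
The plan is to compute $W(\varphi,\varphi_\lambda)$ directly from the definition \eqref{eq3232}, reducing everything to a single Gaussian integral in the integration variable $t$. Substituting $f_1=\varphi$ and $f_2=\varphi_\lambda$, and using that both functions are real (so the complex conjugate has no effect), gives
\[
W(\varphi,\varphi_\lambda)(x,\xi)=\int_{\Ren} e^{-\pi(x+t/2)^2}\,e^{-\pi\lambda(x-t/2)^2}\,e^{-2\pi i\xi t}\,dt,
\]
where $t^2=t\cdot t$. Since every factor splits as a product over the $d$ coordinates, the whole computation factorizes and it suffices to treat the one-dimensional case, the final prefactor then appearing with exponent $d/2$.

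First I would expand the real quadratic in the exponent, writing $(x+t/2)^2+\lambda(x-t/2)^2=(1+\lambda)x^2+(1-\lambda)xt+\tfrac{1+\lambda}{4}t^2$, and collect the terms involving $t$. This exhibits the integrand as $e^{-\pi(1+\lambda)x^2}\,e^{-A t^2+B\cdot t}$ with $A=\tfrac{\pi(1+\lambda)}{4}$ and $B=-\pi(1-\lambda)x-2\pi i\xi$. The heart of the argument is then the standard Gaussian formula $\int_{\Ren}e^{-A t^2+B\cdot t}\,dt=(\pi/A)^{d/2}e^{B^2/(4A)}$ (with $B^2=B\cdot B$), valid for complex vectors $B$, which I would apply after completing the square in $t$. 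The prefactor is immediate, since $(\pi/A)^{d/2}=(4/(1+\lambda))^{d/2}=2^d/(1+\lambda)^{d/2}$, matching the constant in \eqref{Wgausslambda}.

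It remains to simplify the exponent $-\pi(1+\lambda)x^2+B^2/(4A)$ and read off the three constants. Expanding $B^2$ produces an $x^2$ term, a $\xi^2$ term, and a cross term in $x\xi$; combining the two $x^2$ contributions uses the identity $(1-\lambda)^2-(1+\lambda)^2=-4\lambda$, which yields precisely $a_\lambda=\tfrac{4\lambda}{1+\lambda}$, while the $\xi^2$ and cross terms give $b_\lambda=\tfrac{4}{1+\lambda}$ and $c_\lambda=\tfrac{2(1-\lambda)}{1+\lambda}$. This bookkeeping, in particular keeping track of the imaginary part of $B$ through the completion of the square so that the cross term emerges with the correct factor of $i$ and sign, is the only delicate point; there is no analytic obstacle, the integral being an absolutely convergent Gaussian for every $\lambda>0$.
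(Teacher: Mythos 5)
Your proposal is correct and follows essentially the same route as the paper: a direct evaluation of the defining integral by expanding the quadratic exponent and completing the square in $t$. The only cosmetic difference is that you absorb the imaginary linear term into a single complex Gaussian formula $\int_{\Ren}e^{-At^2+B\cdot t}\,dt=(\pi/A)^{d/2}e^{B^2/(4A)}$, whereas the paper first completes the real square, changes variables, and then invokes the Fourier transform of $e^{-\pi s^2}$ --- which is the same analytic fact in two steps.
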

\begin{proof} The proof is obtained by an easy computation. In particular,  we will make the change of variables 
  $t=\frac{2s}{\sqrt{1+\lambda}}-2\frac{1-\lambda}{\sqrt{1+\lambda}}x$, so that $dt=\frac{2^d}{(1+\lambda)^{d/2}}ds$. In details,
\begin{align*}
W(\varphi,\varphi_\lambda)(x,\xi)&=\int_{\bR^{d}}e^{-\pi\big(x+\frac{t}{2}\big)^2-\pi\lambda\big(x-\frac{t}{2}\big)^2}e^{-2\pi it\xi}\;dt\\
&= e^{-\pi(1+\lambda)x^2}\int_{\bR^{d}} e^{-\frac{\pi}{4}\big[(1+\lambda)t^2+4(1-\lambda)xt\big]}e^{-2\pi it\xi}\;dt\\
&= e^{-\pi(1+\lambda)x^2}\int_{\bR^{d}} e^{-\frac{\pi}{4}\big[\sqrt{1+\lambda}t+2\frac{(1-\lambda)}{\sqrt{1+\lambda}}x\big]^2}e^{\pi\frac{(1-\lambda)^2}{1+\lambda}x^2}e^{-2\pi it\xi}\;dt\\
&= e^{-\pi\big[(1+\lambda)-\frac{(1-\lambda)^2}{1+\lambda}\big]x^2}\int_{\bR^{d}} e^{-\frac{\pi}{4}\big[\sqrt{1+\lambda}t+2\frac{1-\lambda}{\sqrt{1+\lambda}}x\big]^2}e^{-2\pi it\xi}\;dt\\
&= e^{-\pi\frac{4\lambda}{1+\lambda}x^2}\int_{\bR^d}e^{-\pi s^2}e^{-2\pi i\big(\frac{2s}{\sqrt{1+\lambda}}-\frac{2(1-\lambda)}{1+\lambda}x\big)\xi}\frac{2^d}{(1+\lambda)^{d/2}}\;ds\\
&=\frac{2^d}{(1+\lambda)^{d/2}}e^{-\pi\frac{4\lambda}{1+\lambda}x^2}e^{4\pi i\frac{1-\lambda}{1+\lambda}x\xi}\int_{\bR^d}e^{-\pi s^2}e^{-2\pi is\big(\frac{2\xi}{\sqrt{1+\lambda}}\big)}\;ds\\
&=\frac{2^d}{(1+\lambda)^{d/2}}e^{-\pi\frac{4\lambda}{1+\lambda}x^2}e^{-\pi\frac{4}{1+\lambda}\xi^2}e^{4\pi i\frac{1-\lambda}{1+\lambda}x\xi},
\end{align*}
as desired.
\end{proof}

Hence the Wigner distribution above is a generalized Gaussian. Our goal will be to compute the modulation norm of this Wigner distribution. The first step is the calculation of the STFT of a generalized Gaussian.
\begin{proposition}\label{2tesi}
Given $a,b,c>0$, consider the generalized Gaussian function  \begin{equation}\label{GG}
f(x,\xi)=e^{-\pi ax^2}e^{-\pi b\xi^2}e^{2\pi icx\xi},\quad \phas\in\rdd.
\end{equation} For $\Phi(x,\xi)=e^{-\pi(x^2+\xi^2)}$,  $z=(z_1,z_2)$, $\zeta=(\zeta_1,\zeta_2)\in\rdd$, we obtain
\begin{align}
V_\Phi f(z,\zeta)&=\frac{1}{[(a+1)(b+1)+c^2]^{\frac d2}} 
e^{-\pi\frac{[a(b+1)+c^2]z_1^2+[(a+1)b+c^2]z_2^2+(b+1)\zeta_1^2+(a+1)\zeta^2_2-2c(z_1\zeta_2+z_2\zeta_1)}{(a+1)(b+1)+c^2}} \nonumber\\
&\quad\quad\times\quad  e^{-\frac{2\pi i}{a+1} \big[z_1\zeta_1+(cz_1-(a+1)\zeta_2)\frac{c\zeta_1+(a+1)z_2}{(a+1)(b+1)+c^2}\big]}.
\label{STFTgenGauss}
\end{align}
\end{proposition}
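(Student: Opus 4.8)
The plan is to evaluate the defining integral of the STFT directly, recognizing it as a (complex) Gaussian integral and applying the standard completion-of-the-square formula. Using the definition \eqref{eqi2} on $\rdd$ and the fact that $\Phi$ is real and even (so $\overline{\Phi(t-z)}=\Phi(t-z)$), I would first write, with $t=(t_1,t_2)\in\rdd$,
\[
V_\Phi f(z,\zeta)=\int_{\rdd} e^{-\pi a t_1^2-\pi b t_2^2+2\pi i c\, t_1 t_2}\,e^{-\pi\left((t_1-z_1)^2+(t_2-z_2)^2\right)}\,e^{-2\pi i(\zeta_1 t_1+\zeta_2 t_2)}\,dt,
\]
and then gather all $t$-dependent terms into the shape $-\pi\langle A t,t\rangle+2\pi\langle w,t\rangle$ plus a constant $-\pi(z_1^2+z_2^2)$, where $\langle\cdot,\cdot\rangle$ is the bilinear extension of the Euclidean pairing,
\[
A=\begin{pmatrix}(a+1)I_d & -icI_d\\ -icI_d & (b+1)I_d\end{pmatrix},\qquad w=\begin{pmatrix}z_1-i\zeta_1\\ z_2-i\zeta_2\end{pmatrix}.
\]
Since $\mathrm{Re}\,A=\mathrm{diag}\big((a+1)I_d,(b+1)I_d\big)$ is positive definite (here $a,b>0$), the integral converges absolutely and $A$ is a symmetric matrix with positive definite real part.

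Second, I would invoke the Gaussian identity
\[
\int_{\rdd} e^{-\pi\langle A t,t\rangle+2\pi\langle w,t\rangle}\,dt=(\det A)^{-1/2}\,e^{\pi\langle A^{-1}w,w\rangle},
\]
valid for such $A$. Because $A$ has the scalar--block structure above, both $\det A$ and $A^{-1}$ reduce to the corresponding $2\times 2$ quantities (the latter applied blockwise): one finds $\det A=\left[(a+1)(b+1)+c^2\right]^{d}$, which is real and positive and produces exactly the prefactor $\left[(a+1)(b+1)+c^2\right]^{-d/2}$, together with
\[
A^{-1}=\frac{1}{(a+1)(b+1)+c^2}\begin{pmatrix}(b+1)I_d & icI_d\\ icI_d & (a+1)I_d\end{pmatrix}.
\]

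Third, I would expand $\pi\langle A^{-1}w,w\rangle$ using $w_1=z_1-i\zeta_1$, $w_2=z_2-i\zeta_2$, separate it into real and imaginary parts, and add back the constant $-\pi(z_1^2+z_2^2)$. Collecting the real part and simplifying the coefficients of $z_1^2,z_2^2,\zeta_1^2,\zeta_2^2$ and of the cross terms $z_1\zeta_2+z_2\zeta_1$ yields precisely the Gaussian modulus on the first line of \eqref{STFTgenGauss}; the imaginary part assembles into the phase on the second line. The genuinely delicate step is this last one: the bookkeeping needed to recast the imaginary part of $\langle A^{-1}w,w\rangle$ into the compact factored phase written in \eqref{STFTgenGauss}, since that phase couples all four variables $z_1,z_2,\zeta_1,\zeta_2$. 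By contrast the modulus $|V_\Phi f|$ — which is all that is required for the modulation-norm computations in the sequel — comes from the real part alone and is comparatively transparent. I would sanity-check the whole formula on the symmetric case $a=b$, $c=0$, where $f=\Phi$ and $V_\Phi\Phi$ factors as a product of one-dimensional Gaussian STFTs, confirming both the prefactor and the exponents.
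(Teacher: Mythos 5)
Your approach is correct, and it is genuinely different from the paper's. The paper evaluates the integral iteratively: it completes the square in the first group of variables, uses the one-dimensional Gaussian formula and the Fourier transform of a Gaussian, then repeats the procedure (completion of the square plus an explicit change of variables) in the second group, and finally substitutes back. You instead treat the whole thing as a single complex Gaussian integral over $\rdd$ governed by the block matrix $A$, and invoke the formula $\int e^{-\pi\langle At,t\rangle+2\pi\langle w,t\rangle}\,dt=(\det A)^{-1/2}e^{\pi\langle A^{-1}w,w\rangle}$ for symmetric $A$ with positive definite real part. Your route gets the prefactor and the entire exponent in one stroke, since $\det A=[(a+1)(b+1)+c^2]^d$ and $A^{-1}$ are immediate from the scalar-block structure, and the convergence and branch issues are settled once and for all by $\operatorname{Re}A>0$ and $\det A>0$; the paper's route is more elementary and self-contained (only scalar completions of squares), at the price of heavier bookkeeping and more opportunities for slips.

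One such slip is in fact present, and you need to be aware of it before declaring that your third step closes the argument: carrying out the expansion of $\pi\langle A^{-1}w,w\rangle$, the imaginary part does \emph{not} assemble into the phase printed in \eqref{STFTgenGauss}. Writing $D=(a+1)(b+1)+c^2$, your computation yields the phase
\begin{align*}
&\exp\Big(-\frac{2\pi i}{a+1}\Big[z_1\zeta_1-\big(cz_1-(a+1)\zeta_2\big)\frac{c\zeta_1+(a+1)z_2}{D}\Big]\Big)\\
&\qquad=\exp\Big(-\frac{2\pi i}{D}\big[(b+1)z_1\zeta_1+(a+1)z_2\zeta_2-c(z_1z_2-\zeta_1\zeta_2)\big]\Big),
\end{align*}
i.e.\ the opposite relative sign on the product term compared with the statement. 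This is not a defect of your method but a sign error in the paper: in its last change of variables one has $\xi=\frac{\sqrt{a+1}}{\sqrt{D}}\,t+\frac{c\zeta_1+(a+1)z_2}{D}$ with a plus sign, whereas the paper substitutes a minus, and this propagates into the printed formula. Your own proposed sanity check exposes the discrepancy: for $a=b=1$, $c=0$ (so $f=\Phi$), the tensor-product computation gives the phase $e^{-\pi i(z_1\zeta_1+z_2\zeta_2)}$, while \eqref{STFTgenGauss} as printed gives $e^{-\pi i(z_1\zeta_1-z_2\zeta_2)}$. The mismatch is harmless for everything that follows, because the subsequent corollaries and all modulation-norm computations use only $|V_\Phi f|$, that is, only the real part of the exponent, which your expansion reproduces exactly; but your write-up should state the corrected phase rather than assert agreement with the formula as printed.
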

\begin{proof} We write
\begin{align*}
V_\Phi f(z,\zeta)&=\int_{\bR^{2d}}e^{-\pi ax^2-\pi b\xi^2+2\pi icx\xi}e^{-2\pi i(\zeta_1x+\zeta_2\xi)}e^{-\pi[(x-z_1)^2+(\xi-z_2)^2]}\;dxd\xi\\
&= e^{-\pi(z_1^2+z_2^2)}\int_{\bR^d}\bigg(\int_{\bR^d}e^{-\pi[(a+1)x^2-2xz_1]}e^{-2\pi ix(\zeta_1-c\xi)}\;dx\bigg)\\&\quad\times\quad  e^{-\pi[(b+1)\xi^2-2\xi z_2]}e^{-2\pi i\zeta_2\xi}\;d\xi\\
&=e^{-\pi\big(1-\frac{1}{a+1}\big)z_1^2-\pi z_2^2}\int_{\bR^d}\bigg(\int_{\bR^d}e^{-\pi\big[\sqrt{a+1}x-\frac{z_1}{\sqrt{a+1}}\big]^2}e^{-2\pi ix(\zeta_1-c\xi)}\;dx\bigg)\\
&\quad \times\quad e^{-\pi[(b+1)\xi^2-2\xi z_2]}e^{-2\pi i\zeta_2\xi}\;d\xi .
\end{align*}
With the change of variables $ \sqrt{a+1}x-\frac{z_1}{\sqrt{a+1}}=t$, $dx=\frac{dt}{(a+1)^{d/2}}$, we obtain
\begin{align}
V_\Phi f(z,\zeta)&=\frac{1}{(a+1)^{d/2}}e^{-\pi\frac{a}{a+1}z_1^2-\pi z_2^2}\int_{\bR^d}\bigg(\int_{\bR^d}e^{-\pi t^2}e^{-2\pi i\big(\frac{t}{\sqrt{a+1}}+\frac{z_1}{a+1}\big)(\zeta_1-c\xi)}\;dt\bigg)\nonumber\\
&\quad \times\quad e^{-\pi[(b+1)\xi^2-2\xi z_2]}e^{-2\pi i\zeta_2\xi}\;d\xi \nonumber \\
&=\frac{1}{(a+1)^{d/2}}e^{-\pi\frac{a}{a+1}z_1^2-\pi z_2^2-2\pi i\frac{z_1\zeta_1}{a+1}}\int_{\bR^d}e^{-\pi\frac{(\zeta_1-c\xi)^2}{a+1}}e^{2\pi i\frac{cz_1}{a+1}\xi}\nonumber\\
&\quad \times\quad  e^{-\pi[(b+1)\xi^2-2\xi z_2]}e^{-2\pi i\zeta_2\xi}\;d\xi \nonumber\\
&=\frac{1}{(a+1)^{d/2}}e^{-\pi\frac{a}{a+1}z_1^2-\pi z_2^2-2\pi i\frac{z_1\zeta_1}{a+1}-\pi\frac{\zeta_1^2}{a+1}}\nonumber\\
&\quad \times\quad  \int_{\bR^d}e^{-\pi\big[(b+1)\xi^2+\frac{c^2}{a+1}\xi^2-2\frac{c\zeta_1}{a+1}\xi-2\xi z_2\big]} e^{2\pi i\big(\frac{cz_1}{a+1}-\zeta_2\big)\xi}\;d\xi. \label{ee1}
\end{align}
The last integral can be computed as follows:
\begin{align*}
I&:=\int_{\bR^d}e^{-\pi\big[\frac{(a+1)(b+1)+c^2}{a+1}\xi^2-2\frac{c\zeta_1+z_2(a+1)}{a+1}\xi\big]}e^{2\pi i\big(\frac{cz_1}{a+1}-\zeta_2\big)\xi}\;d\xi\\
&=\int_{\bR^d}e^{-\frac{\pi}{a+1}\{[(a+1)(b+1)+c^2]\xi^2-2[c\zeta_1+z_2(a+1)]\xi\}}e^{2\pi i\big(\frac{cz_1}{a+1}-\zeta_2\big)\xi}\;d\xi\\
&=\int_{\bR^d}e^{-\frac{\pi}{a+1}\Bigl[\sqrt{(a+1)(b+1)+c^2}\xi-\frac{c\zeta_1+z_2(a+1)}{\sqrt{(a+1)(b+1)+c^2}}\Bigl]^2}e^{\frac{\pi}{a+1}\frac{[c\zeta_1+z_2(a+1)]^2}{(a+1)(b+1)+c^2}}e^{2\pi i\big(\frac{cz_1}{a+1}-\zeta_2\big)\xi}\;d\xi\\
&=e^{\frac{\pi}{a+1}\frac{[c^2\zeta_1^2+(a+1)^2]}{(a+1)(b+1)+c^2}}\int_{\bR^d}e^{-\frac{\pi}{a+1}\Bigl[\sqrt{(a+1)(b+1)+c^2}\xi-\frac{c\zeta_1+z_2(a+1)}{\sqrt{(a+1)(b+1)+c^2}}\Bigl]^2}e^{2\pi i\big(\frac{cz_1}{a+1}-\zeta_2\big)\xi}\;d\xi.
\end{align*}
Making the change of variables
$t=\frac{\sqrt{(a+1)(b+1)+c^2}\xi-\frac{c\zeta_1+z_2(a+1)}{\sqrt{(a+1)(b+1)+c^2}}}{\sqrt{a+1}}$, so that 
$d\xi=\bigl[\frac{a+1}{(a+1)(b+1)+c^2}\bigl]^{d/2}dt$, we can write
\begin{align*}
I&=\frac{(a+1)^\frac d2}{[(a+1)(b+1)+c^2]^{\frac d2}} \int_{\bR^d}e^{-\pi t^2}e^{2\pi i \big(\frac{cz_1}{a+1}-\zeta_2\big) \Bigl[\frac{\sqrt{a+1}}{\sqrt{(a+1)(b+1)+c^2}}t-\frac{c\zeta_1+(a+1)z_2}{(a+1)(b+1)+c^2}\Bigl]}\;dt\\
&=\frac{(a+1)^\frac d2}{[(a+1)(b+1)+c^2]^{\frac d2}}e^{\frac{\pi}{a+1}\frac{(a+1)^2z_2^2+c^2\zeta_1^2+2c(a+1)z_2\zeta_1}{(a+1)(b+1)+c^2}} e^{-2\pi i \big(\frac{cz_1}{a+1}-\zeta_2\big)\frac{c\zeta_1+(a+1)z_2}{(a+1)(b+1)+c^2}} \\ &\quad \times\quad e^{-\pi\Bigl[\frac{a+1}{(a+1)(b+1)+c^2}\big(\frac{cz_1}{a+1}-\zeta_2\big)^2\Bigl]}.
\end{align*}
The result then follows by substituting the value of the integral $I$ in \eqref{ee1}.
\end{proof}
\begin{corollary}
Consider the generalized Gaussian $f$ defined in \eqref{GG} and the window function $\Phi\phas= e^{-\pi (x^2+\xi^2)}$. Then, for every $1\leq p,q\leq \infty$,  we have
\begin{equation}\label{MODGG}
\|f\|_{M^{p,q}}\asymp \|V_\Phi f\|_{L^{p,q}} \asymp \frac{[(a+1)(b+1)+c^2]^{\frac{d}{p}+\frac{d}{q}-\frac{d}{2}}[(c^2+ab+a)(c^2+ab+b)]^{\frac{d}{2q}-\frac{d}{2p}}}{[b^2(a+1)+b(c^2+a+1)]^\frac{d}{2q}[a^2(b+1)+a(c^2+b+1)]^\frac{d}{2q}}.
\end{equation}
The cases $p=\infty$ or $q=\infty$ can be obtained by using the rule $1/\infty=0$ in  formula \eqref{MODGG}. 
\end{corollary}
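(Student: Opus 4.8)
The plan is to reduce everything to the explicit STFT formula \eqref{STFTgenGauss} of Proposition \ref{2tesi} and then carry out two rounds of elementary Gaussian integration. Since the modulation norm is independent of the (nonzero, Schwartz) window, I may compute $\|f\|_{M^{p,q}}$ using the Gaussian window $\Phi(x,\xi)=e^{-\pi(x^2+\xi^2)}$, which gives the first equivalence $\|f\|_{M^{p,q}}\asymp\|V_\Phi f\|_{L^{p,q}}$. The crucial observation is that the second line of \eqref{STFTgenGauss} is a pure phase (of modulus one), so that $|V_\Phi f(z,\zeta)|$ is a genuine Gaussian: writing $\kappa:=(a+1)(b+1)+c^2$, we have $|V_\Phi f(z,\zeta)|=\kappa^{-d/2}e^{-\pi Q(z,\zeta)/\kappa}$, where $Q$ is the quadratic form in the exponent of the first line.

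First I would note that $Q$ decouples into two independent blocks, one in the variables $(z_1,\zeta_2)$ and one in $(z_2,\zeta_1)$, since the only cross terms are $-2c\,z_1\zeta_2$ and $-2c\,z_2\zeta_1$ (there are no $z_1z_2$, $\zeta_1\zeta_2$, $z_1\zeta_1$ or $z_2\zeta_2$ couplings). Setting $A:=c^2+ab+a$ and $B:=c^2+ab+b$ (the coefficients of $z_1^2$ and $z_2^2$ in $Q$), the two blocks read $A z_1^2-2c z_1\zeta_2+(a+1)\zeta_2^2$ and $B z_2^2-2c z_2\zeta_1+(b+1)\zeta_1^2$. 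Consequently $|V_\Phi f|$ factors as a product of a Gaussian in $(z_1,\zeta_2)$ times one in $(z_2,\zeta_1)$, and the mixed-norm integral factors over the two blocks accordingly.

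Next I would compute the inner $L^p$-integral in $z=(z_1,z_2)$. Completing the square in $z_1$ and $z_2$ and using translation invariance of Lebesgue measure, the Gaussian integrals in $z$ produce the factor $(pA/\kappa)^{-d/2}(pB/\kappa)^{-d/2}$ together with a residual Gaussian in $\zeta$; the key algebraic identities here are $(a+1)A-c^2=a\kappa$ and $(b+1)B-c^2=b\kappa$, which show that the residual $\zeta$-exponent is $-p\pi\big((a/A)\zeta_2^2+(b/B)\zeta_1^2\big)$. Raising to the power $q/p$ and integrating in $\zeta=(\zeta_1,\zeta_2)$ yields two further Gaussian integrals, contributing $(qa/A)^{-d/2}(qb/B)^{-d/2}$. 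Taking the $q$-th root and collecting all powers of $\kappa,A,B,a,b$ (the constants depending only on $p,q,d$ being absorbed into $\asymp$) gives $\|f\|_{M^{p,q}}\asymp \kappa^{d/p-d/2}(ab)^{-d/(2q)}(AB)^{d/(2q)-d/(2p)}$.

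Finally, to recognize this as the stated formula \eqref{MODGG} I would rewrite the ingredients: $A=c^2+ab+a$ and $B=c^2+ab+b$ immediately identify the middle factor $[(c^2+ab+a)(c^2+ab+b)]^{d/(2q)-d/(2p)}$, while the two denominator expressions simplify, via $b^2(a+1)+b(c^2+a+1)=b\kappa$ and $a^2(b+1)+a(c^2+b+1)=a\kappa$, to $(b\kappa)^{d/(2q)}$ and $(a\kappa)^{d/(2q)}$; dividing by their product reproduces exactly $(ab)^{-d/(2q)}\kappa^{-d/q}$ and thus adjusts the power of $\kappa$ from $d/p-d/2$ to $d/p+d/q-d/2$. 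The only genuine difficulty is the bookkeeping: tracking all exponents through the two rounds of Gaussian integration and spotting the four factorizations $(a+1)A-c^2=a\kappa$, $(b+1)B-c^2=b\kappa$, $b^2(a+1)+b(c^2+a+1)=b\kappa$, $a^2(b+1)+a(c^2+b+1)=a\kappa$ that make the answer collapse to the compact form \eqref{MODGG}. The cases $p=\infty$ or $q=\infty$ follow from the same computation under the convention $1/\infty=0$, the corresponding integral being replaced by the peak value of the Gaussian.
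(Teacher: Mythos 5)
Your proof is correct and takes essentially the same route as the paper: both reduce to the modulus of the explicit STFT from Proposition \ref{2tesi} (whose second line is a pure phase), then perform the inner Gaussian integration in $z$ followed by the outer one in $\zeta$, and collect exponents. Your bookkeeping via the two decoupled blocks and the factorizations $(a+1)A-c^2=a\kappa$, $(b+1)B-c^2=b\kappa$, $b^2(a+1)+b(c^2+a+1)=b\kappa$, $a^2(b+1)+a(c^2+b+1)=a\kappa$ is simply a tidier organization of the paper's substitution of the parameters $\alpha,\beta,\gamma,\delta,\sigma$.
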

\begin{proof}
By Proposition \ref{2tesi}, we can write
$$|V_\Phi f(z,\zeta)|=\frac{1}{[(a+1)(b+1)+c^2]^{\frac d2}} 
e^{-\pi\frac{[a(b+1)+c^2]z_1^2+[(a+1)b+c^2]z_2^2+(b+1)\zeta_1^2+(a+1)\zeta^2_2-2c(z_1\zeta_2+z_2\zeta_1)}{(a+1)(b+1)+c^2}}.
$$
It remains to compute the mixed $L^{p,q}$-norm of the previous function. We treat the cases $1\leq p,q <\infty$. The cases either $p=\infty$  or $q=\infty$ are obtained with obvious modifications. 

For simplicity, we set 
\begin{align}\label{param1}\alpha&=\frac{c^2+a(b+1)}{(a+1)(b+1)+c^2},\quad\beta=\frac{c^2+(a+1)b}{(a+1)(b+1)+c^2},\quad \gamma=\frac{(b+1)}{(a+1)(b+1)+c^2},\\
\delta&=\frac{(a+1)}{(a+1)(b+1)+c^2}, \quad \sigma=\frac{c}{(a+1)(b+1)+c^2}.\label{param2}
\end{align}
Hence
\begin{equation}\label{ee2}
\frac{\|V_\Phi f\|_{L^{p,q}}}{[(a+1)(b+1)+c^2]^{-\frac{d}{2p}}}= \bigg(\int_{\bR^{2d}}I^{\frac{q}{p}}e^{-\pi q(\gamma \zeta_1^2+\delta \zeta_2^2)}\;d\zeta_1d\zeta_2\bigg)^{\frac{1}{q}}=:A,
\end{equation}
where
$I:=\int_{\bR^{2d}}e^{-\pi p\alpha z_1^2-\pi p\beta z_2^2}e^{2\pi p\sigma(z_1\zeta_2+z_2\zeta_1)}\;dz_1dz_2$.
Now straightforward computations and change of variables yield
\begin{align*}
 I &= \int_{\bR^{d}}\bigg(\int_{\bR^{d}}e^{-\pi p(\alpha z_1^2-2\sigma \zeta_2z_1)}\;dz_1\bigg)e^{-\pi \beta z_2^2+2\pi p\sigma z_2\zeta_1}\;dz_2\\
     &= e^{\pi p\frac{\sigma^2}{\alpha}\zeta_2^2}e^{\pi p\frac{\sigma^2}{\beta}\zeta_1^2}\int_{\bR^{d}}e^{-\pi p(\sqrt{\alpha}z_1-\frac{\sigma}{\sqrt{\alpha}}\zeta_2)^2}\;dz_1
     \int_{\bR^{d}}e^{-\pi p(\sqrt{\beta}z_2-\frac{\sigma}{\sqrt{\beta}}\zeta_1)^2}\;dz_2\\
     &= p^{-\frac{d}{2}}\alpha^{-\frac{d}{2}} p^{-\frac{d}{2}}\beta^{-\frac{d}{2}} 
     e^{\pi p\frac{\sigma^2}{\alpha}\zeta_2^2} 
     e^{\pi p\frac{\sigma^2}{\beta}\zeta_1^2}.
\end{align*}
Substituting the value of the integral $I$ in \eqref{ee2}, we obtain
\begin{align*}
A&=p^{-\frac{d}{p}}\alpha^{-\frac{d}{2p}}\beta^{-\frac{d}{2p}}
    \bigg(\int_{\bR^{d}}e^{\pi q\frac{\sigma^2}{\beta}\zeta_1^2-\pi q\gamma \zeta_1^2}\;d\zeta_1\int_{\bR^{d}} e^{\pi q\frac{\sigma^2}{\alpha}\zeta_2^2 -\pi q\delta \zeta_2^2}\;d\zeta_2\bigg)^\frac{1}{q}\\
    &=p^{-\frac{d}{p}}\alpha^{-\frac{d}{2p}}\beta^{-\frac{d}{2p}}
    \bigg(\int_{\bR^{d}}e^{-\pi q(\gamma-\frac{\sigma^2}{\beta}) \zeta_1^2}\;d\zeta_1  
    \int_{\bR^{d}} e^{-\pi q(\delta-\frac{\sigma^2}{\alpha})\zeta_2^2} \bigg)^\frac{1}{q}\\
     &=p^{-\frac{d}{p}}\alpha^{-\frac{d}{2p}}\beta^{-\frac{d}{2p}}
     q^{-\frac{d}{q}}\Big(\gamma-\frac{\sigma^2}{\beta}\Big)^{-\frac{d}{2q}}\Big(\gamma-\frac{\sigma^2}{\alpha}\Big)^{-\frac{d}{2q}}. 
\end{align*}
Finally, the goal is attained by substituting in $A$ the values of the parameters $\alpha,\beta,\gamma,\delta,\sigma$ in \eqref{param1} and \eqref{param2} and observing that 
$$\|f\|_{M^{p,q}}\asymp\|V_\Phi f\|_{L^{p,q}}=A [(a+1)(b+1)+c^2]^{-\frac d2}.$$
This concludes the proof.
\end{proof}

We have now all the tools to compute the modulation norm of the (cross-)Wigner distribution $W(\varphi,\varphi_\lambda)$ in \eqref{Wgausslambda}. Precisely, setting   in formula  \eqref{MODGG} the values $a=a_\lambda$, $b=b_\lambda$, $c=c_\lambda$,  where $a_\lambda, b_\lambda, c_\lambda$ are defined in \eqref{ee3}, and making easy simplifications we attain the following result.
\begin{corollary}\label{ee7}
For $\lambda>0$ consider the (cross-)Wigner distribution $W(\varphi,\varphi_\lambda)$ defined in \eqref{Wgausslambda} (cf. Lemma \ref{1tesi}). Then  
\begin{equation}\label{ee4}
\|W(\varphi,\varphi_\lambda)\|_{M^{p,q}}\asymp\frac{[(2\lambda+1)(\lambda+2)]^{\frac{d}{2q}-\frac{d}{2p}}}{\lambda^\frac{d}{2q}(1+\lambda)^{\frac{d}{2}-\frac dp}}.
\end{equation}
The cases $p=\infty$ or $q=\infty$ can be obtained by using the rule $1/\infty=0$ in  formula \eqref{ee4}.
\end{corollary}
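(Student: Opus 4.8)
The plan is to read the result straight off formula \eqref{MODGG} of the preceding Corollary, with essentially no new ideas needed. By Lemma \ref{1tesi}, the cross-Wigner distribution in \eqref{Wgausslambda} is exactly the generalized Gaussian \eqref{GG} with parameters $a=a_\lambda$, $b=b_\lambda$, $c=c_\lambda$ of \eqref{ee3}, multiplied by the positive scalar $2^d(1+\lambda)^{-d/2}$. Since $\|\cdot\|_{M^{p,q}}$ is a norm, hence positively homogeneous, I would first write
\[
\|W(\varphi,\varphi_\lambda)\|_{M^{p,q}}=\frac{2^d}{(1+\lambda)^{d/2}}\,\|f\|_{M^{p,q}},
\]
where $f$ is the generalized Gaussian \eqref{GG} evaluated at $a_\lambda,b_\lambda,c_\lambda$, and then apply \eqref{MODGG} to $\|f\|_{M^{p,q}}$.

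The substance of the computation is the substitution of \eqref{ee3} into the five $\lambda$-dependent blocks occurring in \eqref{MODGG}. Writing $\mu=1+\lambda$, a short computation shows that all of them collapse to clean forms:
\[
(a_\lambda+1)(b_\lambda+1)+c_\lambda^2=9,
\]
\[
c_\lambda^2+a_\lambda b_\lambda+a_\lambda=\frac{4(1+2\lambda)}{1+\lambda},\qquad
c_\lambda^2+a_\lambda b_\lambda+b_\lambda=\frac{4(2+\lambda)}{1+\lambda},
\]
\[
b_\lambda^2(a_\lambda+1)+b_\lambda(c_\lambda^2+a_\lambda+1)=\frac{36}{1+\lambda},\qquad
a_\lambda^2(b_\lambda+1)+a_\lambda(c_\lambda^2+b_\lambda+1)=\frac{36\lambda}{1+\lambda}.
\]
Each identity is forced by the fact that the relevant numerator is a multiple of $\mu^2=(1+\lambda)^2$; for instance $9\lambda^2+18\lambda+9=9\mu^2$ gives the first, $36\lambda^2+72\lambda+36=36\mu^2$ the fourth, and $36\lambda^3+72\lambda^2+36\lambda=36\lambda\mu^2$ the fifth.

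It then remains to collect the powers. The constant $9$ is $\lambda$-independent and is absorbed into $\asymp$, as are the numerical factors $16$ and $36$; what survives, after combining the $(1+\lambda)$-exponents coming from the numerator block $(c_\lambda^2+a_\lambda b_\lambda+a_\lambda)(c_\lambda^2+a_\lambda b_\lambda+b_\lambda)$ raised to $\tfrac{d}{2q}-\tfrac{d}{2p}$ with those from the two denominator blocks raised to $\tfrac{d}{2q}$, is
\[
\|f\|_{M^{p,q}}\asymp [(1+2\lambda)(2+\lambda)]^{\frac{d}{2q}-\frac{d}{2p}}\,\lambda^{-\frac{d}{2q}}\,(1+\lambda)^{\frac dp}.
\]
Multiplying by the prefactor $2^d(1+\lambda)^{-d/2}$ and simplifying $(1+\lambda)^{\frac dp-\frac d2}$ yields precisely \eqref{ee4}. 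The only real obstacle here is exponent bookkeeping: the power of $(1+\lambda)$ is assembled from four distinct sources, so the collection of terms must be carried out with some care, whereas conceptually the statement is an immediate consequence of \eqref{MODGG} once the five blocks above are evaluated. The limiting cases $p=\infty$ or $q=\infty$ need no separate treatment, the convention $1/\infty=0$ propagating through every exponent.
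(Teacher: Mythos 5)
Your proposal is correct and follows exactly the paper's route: the paper's entire proof consists of the sentence preceding the corollary, which says to substitute $a=a_\lambda$, $b=b_\lambda$, $c=c_\lambda$ from \eqref{ee3} into \eqref{MODGG} and make ``easy simplifications''. Your computation spells out those simplifications explicitly (the five blocks collapsing to $9$, $\tfrac{4(1+2\lambda)}{1+\lambda}$, $\tfrac{4(2+\lambda)}{1+\lambda}$, $\tfrac{36}{1+\lambda}$, $\tfrac{36\lambda}{1+\lambda}$, all verified correct), and the handling of the prefactor $2^d(1+\lambda)^{-d/2}$ by homogeneity matches the paper's implicit use of the same fact.
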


\section{Main Result}


In this Section we prove Theorem \ref{T1}. We will focus separately on the sufficient and necessary part in the statement.  


\begin{theorem}[\bf Sufficient Conditions]\label{wigestp}
	If $ p_1,q_1,p_2,q_2,p,q\in [1,\infty]$ are indices which satisfy \eqref{WIR} and \eqref{Wigindexsharp}, $s\in\bR$, $f_1\in M^{p_1,q_1}_{v_{|s|}}(\Ren)$ and
	$f_2\in M^{p_2,q_2}_{v_s}(\Ren)$, then  $W(f_1,f_2)\in
	M^{p,q}_{1\otimes v_s}(\Renn)$, and the estimate \eqref{wigest} holds true.
\end{theorem}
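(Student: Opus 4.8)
The plan is to reduce estimate \eqref{wigest} to an elementary mixed-norm convolution inequality for the short-time Fourier transforms of $f_1,f_2$, by means of the formula for $V_\Phi(W(f_1,f_2))$ in Lemma \ref{STFTSTFT}. I would first carry out two reductions, both resting on the inclusion relations \eqref{modspaceincl1}, which transform the sharp hypotheses \eqref{WIR}--\eqref{Wigindexsharp} into the classical equality regime, and then close the argument by a direct computation. For the first reduction, suppose $p>q$; since then $q\le p$ we have $M^{q,q}_{1\otimes v_s}\hookrightarrow M^{p,q}_{1\otimes v_s}$, so it suffices to prove \eqref{wigest} with $(p,q)$ replaced by $(q,q)$. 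The hypotheses survive this replacement: \eqref{WIR} already forces $1/p_1+1/p_2\ge 2/q$ and $1/q_1+1/q_2\ge 2/q$, which are exactly \eqref{Wigindexsharp} for the target $(q,q)$. Hence I may assume henceforth that $p\le q$.

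For the second reduction, when $p\le q$ the hypotheses $p_i,q_i\le q$ and \eqref{Wigindexsharp} guarantee that $1/p+1/q$ lies in the interval $[\,2/q,\ 1/\max(p,p_1)+1/\max(p,p_2)\,]$; indeed the lower bound is equivalent to $p\le q$, and the upper bound follows from \eqref{Wigindexsharp} together with $p_i\le q$. By continuity I can therefore pick $\tilde p_i\in[\max(p,p_i),q]$ with $1/\tilde p_1+1/\tilde p_2=1/p+1/q$, and likewise $\tilde q_i\in[\max(p,q_i),q]$ with $1/\tilde q_1+1/\tilde q_2=1/p+1/q$. Since $\tilde p_i\ge p_i$ and $\tilde q_i\ge q_i$, \eqref{modspaceincl1} gives $\|f_i\|_{M^{\tilde p_i,\tilde q_i}_{w_i}}\lesssim\|f_i\|_{M^{p_i,q_i}_{w_i}}$ for the weights $w_1=v_{|s|}$, $w_2=v_s$. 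I may thus assume that both relations in \eqref{Wigindexsharp} hold with equality and that $p\le p_i,q_i\le q$.

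For the core estimate I fix a Gaussian $g$, set $\Phi=W(g,g)$, and write $F_i=|V_gf_i|$. By Lemma \ref{STFTSTFT},
\[
|V_\Phi(W(f_1,f_2))(z,\zeta)|=F_1\big(z_1-\tfrac{\zeta_2}2,z_2+\tfrac{\zeta_1}2\big)\,F_2\big(z_1+\tfrac{\zeta_2}2,z_2-\tfrac{\zeta_1}2\big),
\]
and the phase-space arguments $a=(z_1-\tfrac{\zeta_2}2,z_2+\tfrac{\zeta_1}2)$ and $b=(z_1+\tfrac{\zeta_2}2,z_2-\tfrac{\zeta_1}2)$ of $F_1,F_2$ satisfy $a-b=(-\zeta_2,\zeta_1)$, so that $v_s(\zeta)=v_s(a-b)$. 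The Peetre-type inequality $v_s(a-b)\lesssim v_{|s|}(a)\,v_s(b)$ then splits the target weight as $v_s(\zeta)\lesssim v_{|s|}(a)\,v_s(b)$, which is exactly what attaches the weight $v_{|s|}$ to $f_1$ and $v_s$ to $f_2$. Absorbing these into $H_1=|V_gf_1|\,v_{|s|}$ and $H_2=|V_gf_2|\,v_s$, so that $\|H_i\|_{L^{p_i,q_i}}=\|f_i\|_{M^{p_i,q_i}_{w_i}}$, I am left with the unweighted inequality
\[
\big\|H_1\big(z_1-\tfrac{\zeta_2}2,z_2+\tfrac{\zeta_1}2\big)\,H_2\big(z_1+\tfrac{\zeta_2}2,z_2-\tfrac{\zeta_1}2\big)\big\|_{L^{p,q}_{z,\zeta}}\lesssim\|H_1\|_{L^{p_1,q_1}}\|H_2\|_{L^{p_2,q_2}}.
\]

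For fixed $\zeta$ the substitution $u=z_1-\tfrac{\zeta_2}2$, $v=z_2+\tfrac{\zeta_1}2$ turns the inner $L^p$-integral into $(H_1^p\star H_2^p)(\zeta_2,-\zeta_1)$, where $\star$ is the cross-correlation on $\Renn=\Ren\times\Ren$; integrating in $\zeta$ yields $\|W(f_1,f_2)\|_{M^{p,q}_{1\otimes v_s}}^q\lesssim\|H_1^p\star H_2^p\|_{L^{q/p}}^{q/p}$, where $q/p\ge1$ by the first reduction. I then apply the mixed-norm Young inequality on $\Ren\times\Ren$, with the exponent identities $p/p_1+p/p_2=1+p/q$ and $p/q_1+p/q_2=1+p/q$ (precisely the equalities secured above, whose exponents lie in $[1,\infty]$ because $p\le p_i,q_i$); this gives $\|H_1^p\star H_2^p\|_{L^{q/p}}\le\|H_1\|_{L^{p_1,q_1}}^p\|H_2\|_{L^{p_2,q_2}}^p$, and taking $q$-th roots is \eqref{wigest}. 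The endpoint cases $p=\infty$ or $q=\infty$ follow by the usual modifications. I expect the main obstacle to be structural rather than computational: it is the two reductions that convert the \emph{sharp} hypotheses — the inequalities in \eqref{Wigindexsharp} and the regime $p>q$ lying outside the classical equality theorem — into a setting with an admissible Young exponent triple, while the weight split via the Peetre inequality is the mechanism responsible for the asymmetric pair $(v_{|s|},v_s)$ appearing in the statement.
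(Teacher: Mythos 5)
Your proof is correct and follows essentially the same route as the paper's: the identity of Lemma \ref{STFTSTFT}, the change of variables turning the inner $L^p$-integral into a convolution of $|V_gf_1|^p$ and $|V_gf_2|^p$, the mixed-norm Young inequality of Benedek--Panzone, and the inclusion relations \eqref{modspaceincl1} to pass from the sharp inequalities \eqref{WIR}--\eqref{Wigindexsharp} to the classical equality regime \eqref{Wigindex}. The differences are only organizational: your single intermediate-value choice of auxiliary exponents $\tilde p_i,\tilde q_i$ packages into one step what the paper does through the case analysis of Steps 1--2 (raising $u_1=tp_1$, then substituting $u_1=p$, $u_2=q$), and your explicit Peetre splitting $v_s(a-b)\lesssim v_{|s|}(a)\,v_s(b)$ makes transparent the weighted Young inequality that the paper invokes directly, which is also exactly the mechanism producing the asymmetric weights $(v_{|s|},v_s)$ in the statement.
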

\begin{proof}  
We first study the case $p,q<\infty$. Let $g\in \cS (\rd ) $ and set $\Phi=W(g,g)\in\sch(\Renn)$.  If $\zeta
	= (\z_1,\z_2)\in
	\Renn$, we write $\tilde{\zeta } = (\zeta _2,-\zeta _1)$. Then, from 
	Lemma \ref{STFTSTFT}, 
	\begin{equation}\label{e0}
	|{{V}}_\Phi (W(f_1,f_2))(z,\zeta)| =| V_g f_2(z
	+\tfrac{\tilde{\z }}{2})| \,  |V_g f_1(z - \tfrac{\tilde{\z }}{2})| \,.
	\end{equation}
	Consequently,
	\begin{equation*}
	\|W( f_1,f_2)\|_{M^{p,q}_{1\otimes v_s}}  \asymp
	\left(\intrdd\!\left(\intrdd\!
	| V_g f_2(z +\tfrac{\tilde{\z }}{2})|^p \,  |V_g f_1(z -
	\tfrac{\tilde{\z }}{2})|^p   \, dz \right)^\frac{q}{p} \, \langle \zeta \rangle
	^{sq} \, d\zeta \right)^{1/q}.
	\end{equation*}
	After  the change of variables $z \mapsto z-\tilde{\zeta } /2$, the
	integral over $z$ becomes the convolution $(|V_g f_2|^p\ast
	|(V_g{f_1})^*|^p)(\tilde{\zeta })$,
	and observing that $(1\otimes v_s) (z,\zeta ) = \langle \zeta \rangle ^s =
	v_s (\zeta )= v_s (\tilde{\zeta })$, we obtain
	\begin{eqnarray*}
		\|W(f_1,f_2)\|_{M^{p,q}_{1\otimes v_s}}
		&\asymp&
		\left(\iint_{\Renn}\!(|V_g f_2|^p\ast |(V_g {f_1})^*|^p)^\frac{q}{p}(\tilde{\zeta})
		v_s(\tilde{\zeta})^{q} \, d\zeta \right)^{1/p}\\
		&=& \| \, |V_g f_2|^p\ast |(V_g{ f_1})^*|^p \, \|^{\frac 1 p}_{L^\frac{q}{p}_{v _{ps}}}.
	\end{eqnarray*}
	Hence 
	\begin{equation}\label{e1}
		\|W(f_1,f_2)\|^p_{M^{p,q}_{1\otimes v_s}}\asymp \| \, |V_g f_2|^p\ast |(V_g{ f_1})^*|^p \, \|_{L^\frac{q}{p}_{v _{ps}}}.
	\end{equation}
\emph{Case $p\leq q<\infty$}.\par {\bf Step 1.}  Here we prove the desired result in the case $p\leq p_i,q_i$, $i=1,2$.\par
Suppose first that \eqref{Wigindex} are satisfied (and hence $p_i,q_i\leq q$,  $i=1,2$).
Since $q/p\geq 1$, we can apply Young's Inequality  for mixed-normed spaces	(cf. \cite{BP61}, see also \cite{Galperin2014}) and majorize \eqref{e1} as follows
	\begin{align*}
		\|W(f_1,f_2)\|^p_{M^{p,q}_{1\otimes v_s}}&\lesssim 
		\| \, |V_g f_2|^p\|_{L^{r_2,s_2}_{v _{p|s|}}}\|\, |(V_g{ f_1})^*|^p  \|_{L^{r_1,s_1}_{v _{ps}}} \, \\
		&= \| |V_g{ f_1}|^p  \|_{L^{r_1,s_1}_{v _{p|s|}}} \| \, |V_g f_2|^p\|_{L^{r_2,s_2}_{v _{ps}}}\, \\
		&= \| V_g{ f_1}  \|^p_{L^{pr_1,ps_1}_{v _{|s|}}} \| V_g f_2\|^p_{L^{p r_2,ps_2}_{v _{s}}}\, ,
	\end{align*}
	for every $1\leq r_1,r_2,s_1,s_2\leq\infty$ such that 
	 \begin{equation}\label{e2}
	\frac 1{r_1}+\frac{1}{r_2}=\frac 1{s_1}+\frac{1}{s_2}=1+ \frac{p}{q}.
	\end{equation}
Choosing $r_i=p_i/p\geq 1$, $s_i=q_i/p\geq 1$, $i=1,2$, the indices' relation \eqref{e2} becomes \eqref{Wigindex}  and we obtain 
$$	\|W(f_1,f_2)\|_{M^{p,q}_{1\otimes v_s}}\lesssim \| V_g{ f_1}  \|_{L^{p_1,q_1}_{v _{|s|}}} \| V_g f_2\|_{L^{p_2,q_2}_{v _{s}}}\asymp \|f_1\|_{M^{p_1,q_1}_{v_{|s|}}}\|f_2\|_{M^{p_2,q_2}_{v_s}}.
$$
Now, still assume $p\leq p_i,q_i$, $i=1,2$ but
$$\frac 1{p_1}+\frac 1{p_2}\geq \frac{1}{p}+\frac 1q,\quad \frac 1{q_1}+\frac 1{q_2}= \frac{1}{p}+\frac 1q,
$$
(hence $p_i,q_i\leq q$, $i=1,2$). We set $u_1=t p_1$,  and look for $t\geq 1$ (hence $u_1\geq p_1$) such that
$$\frac 1 {u_1}+\frac 1{p_2}= \frac{1}{p}+\frac 1q
$$
that gives
$$0<\frac 1t=\frac{p_1}{p}+\frac{p_1}{q}-\frac{p_1}{p_2}\leq1
$$
because $p_1(1/p+1/q)-p_1/p_2\leq p_1(1/p_1+1/p_2)-p_1/p_2=1$ whereas the lower bound of the previous estimate follows by   $1/(tp_1)=1/p+1/q-1/p_2>0$ since $p\leq p_2$.
Hence the previous part of the proof gives 
\begin{align*}
	\|W(f_1,f_2)\|_{M^{p,q}_{1\otimes v_s}}&\lesssim \|f_1\|_{M^{u_1,q_1}_{v_{|s|}}}\|f_2\|_{M^{p_2,q_2}_{v_s}}\\
	&\lesssim \|f_1\|_{M^{p_1,q_1}_{v_{|s|}}}\|f_2\|_{M^{p_2,q_2}_{v_s}}.	
\end{align*}
where the last inequality follows by inclusion relations for modulations spaces $M^{p_1,q_1}_{v_s}(\rd)\subseteq M^{u_1,q_1}_{v_s}(\rd)$ for $p_1\leq u_1$.

The general case 
$$\frac 1{p_1}+\frac 1{p_2}\geq  \frac{1}{p}+\frac 1q,\quad \frac 1{q_1}+\frac 1{q_2}\geq \frac{1}{p}+\frac 1q,
$$
 can be treated analogously.\par
\noindent {\bf Step 2}. Assume now that $p_i,q_i\leq q$, $i=1,2$, and satisfy relation \eqref{Wigindexsharp}.  If at least one out of the indices $p_1, p_2$ is less than $p$, assume for instance $p_1\leq p$, whereas $p\leq q_1,q_2$, then we proceed as follows. 
We choose $u_1=p$, $u_2=q$, and deduce by the results in Step 1 (with $p_1=u_1$ and $p_2=u_2$) that
$$ 	\|W(f_1,f_2)\|_{M^{p,q}_{1\otimes v_s}}\lesssim \|f_1\|_{M^{u_1,q_1}_{v_{|s|}}}\|f_2\|_{M^{u_2,q_2}_{v_s}}\lesssim\|f_1\|_{M^{p_1,q_1}_{v_{|s|}}}\|f_2\|_{M^{p_2,q_2}_{v_s}}
$$ 
where the last inequality follows by inclusion relations for modulation spaces, since 
$p_1\leq u_1=p$ and $p_2\leq u_2=q$.\par
Similarly we argue when at least one out of the indices $q_1, q_2$ is less than $p$ and $p\leq p_1,p_2$ or when at least one out of the indices $q_1, q_2$ is less than $p$ and at least one out of the indices $q_1, q_2$ is less than $p$. The remaining case $p\leq p_i,q_i\leq q$ is treated in Step 1.

\noindent\emph{Case $p<q=\infty$.} The argument are similar to the case $p\leq q<\infty$.

\noindent\emph{Case $p=q=\infty$.}  We use \eqref{e0} and the submultiplicative property of the weight $v_s$,
\begin{align*}
\|W(f_1,f_2)\|_{M^{\infty}_{1\otimes v_s}}&=\sup_{z,\zeta\in\rdd}| V_g f_2(z
+\tfrac{\tilde{\z }}{2})| \,  |V_g f_1(z - \tfrac{\tilde{\z }}{2})| v_s(\zeta)\\
&=\sup_{z,\zeta\in\rdd}|| V_g f_2(z)| \,  |(V_g f_1)^*(z - \tilde{\z })| v_s(\z )\\
&=\sup_{z,\zeta\in\rdd}|| V_g f_2(z)| \,  |(V_g f_1)^*(z - \tilde{\z })| v_s(\tilde{\z })\\
&\leq \sup_{z\in\rdd}(\|V_g f_1 v_{|s|}\|_{\infty} \,|V_g f_2(z) v_s(z)|)= \|V_g f_1 v_{|s|}\|_{\infty}\|V_g f_2 v_s\|_{\infty}\\
&\asymp\|f\|_{M^\infty_{v_{|s|}}}\|g\|_{M^\infty_{v_s}}\leq \|f\|_{M^{p_1,q_1}_{v_{|s|}}}\|f\|_{M^{p_2,q_2}_{v_s}},
\end{align*}
for every $1\leq p_i,q_i\leq \infty$, $i=1,2$. Notice that in this case conditions \eqref{WIR} and \eqref{Wigestsharp} are trivially satisfied.

\noindent\emph{Case $p>q$.} Using the inclusion relations for modulation spaces, we majorize 
$$\|W(f_1,f_2)\|_{M^{p,q}_{1\otimes v_s}}\lesssim \|W(f_1,f_2)\|_{M^{q,q}_{1\otimes v_s}}\lesssim \|f_1\|_{M^{p_1,q_1}_{v_{|s|}}}\|f_2\|_{M^{p_2,q_2}_{v_s}}$$
for every $1\leq p_i,q_i\leq q$, $i=1,2$. Here we have applied  the case $p\leq q$ with $p=q$. Notice that in this case condition \eqref{Wigestsharp} is trivially satisfied, since from $p_1,q_i\leq q$ we infer $1/p_1+1/p_2\geq 1/q+1/q$, $1/q_1+1/q_2\geq 1/q+1/q$. This concludes the proof.
\end{proof}
\begin{remark}\label{pesi} \par\indent
(i) The result of Theorem \ref{wigestp} can be extended to more general  weights. In particular, it holds for polynomial weights satisfying relation $(4.10)$ in \cite{Toftweight}.  Hence our result extends Toft's result \cite[Theorem 4.2]{Toftweight}  (cf.\ also \cite[Theorem 4.1]{toft1} for \modsp s without weights). 
Other examples of suitable weights are given by sub-exponential weights of the type $v(z)= e^{\alpha |z|^{\beta}}$ for $\alpha>0$ and $0<\beta<1$.

\indent (ii) The particular case $p=1$, $1\leq q \leq \infty$, $p_1=q_1=1$, $p_2=q_2=q$, $s\geq 0$, was already proved in \cite[Prop. 2.2]{CG02}.\par

\indent (iii) For $p_i=q_i=p=q=2$, $i=1,2$, we obtain the following continuity result for the cross-Wigner distribution acting between Shubin spaces and Sobolev spaces:

For $s\geq 0$,  $f_1,f_2\in \Qs(\Ren)$ (cf. Shubin's book \cite{Shubin91}), the cross-Wigner distribution  $W(f_1,f_2)$ is in $H^s(\rdd)$ with 
$$\|W(f_1,f_2)\|_{H^s(\rdd)}\lesssim \|f_1\|_{\Qs(\Ren)}\|f_2\|_{\Qs(\Ren)}.
$$

\indent (iv) Continuity properties of the cross-Wigner distribution on modulation spaces with different weight functions can be easily inferred using the techniques of Theorem \ref{wigestp} and  the Young type inequalities for weighted spaces shown by Johansson et al. in  \cite[Theorem 2.2]{JPTT2015}.
\end{remark}
The estimate in \eqref{wigest} can be slightly improved if $s\geq0$. Precisely, we have the following result. 
\begin{theorem}\label{wigestpbis}
	If $ p_1,q_1,p_2,q_2,p,q\in [1,\infty]$ are indices which satisfy \eqref{WIR} and \eqref{Wigindexsharp}, $s\geq 0$, $f_1\in M^{p_1,q_1}_{v_{s}}(\Ren)$ and
	$f_2\in M^{p_2,q_2}_{v_s}(\Ren)$, then  $W(f_1,f_2)\in
	M^{p,q}_{1\otimes v_s}(\Renn)$, with
	\[
	\| W(f_1,f_2)\|_{M^{p,q}_{1\otimes v_s}}\lesssim
	\|f_1\|_{M^{p_1,q_1}}\| f_2\|_{M^{p_2,q_2}_{v_s}}+\|f_1\|_{M^{p_1,q_1}_{v_s}}\| f_2\|_{M^{p_2,q_2}}.
\]
\end{theorem}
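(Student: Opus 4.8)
The plan is to run the argument of Theorem \ref{wigestp} essentially verbatim, the only new ingredient being that, when $s\geq 0$, the weight $v_s$ can be split \emph{additively} rather than through the multiplicative moderateness estimate used there. I would start from the identity \eqref{e0},
\[
|V_\Phi (W(f_1,f_2))(z,\zeta)| =| V_g f_2(z +\tfrac{\tilde{\zeta}}{2})| \, |V_g f_1(z - \tfrac{\tilde{\zeta}}{2})|,
\]
and treat first the main case $p\leq q<\infty$. Since $v_s(\zeta)=v_s(\tilde\zeta)$ does not depend on $z$, I would carry $v_s(\tilde\zeta)^p$ inside the inner $z$-integral appearing in $\|W(f_1,f_2)\|^p_{M^{p,q}_{1\otimes v_s}}$, and then exploit the fact that $\tilde\zeta=(z+\tfrac{\tilde\zeta}{2})-(z-\tfrac{\tilde\zeta}{2})$ is exactly the difference of the two arguments of the short-time Fourier transforms.

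The crucial point, valid precisely because $s\geq 0$, is the elementary inequality
\[
v_s(w_1-w_2)\lesssim v_s(w_1)+v_s(w_2),\qquad w_1,w_2\in\Renn,
\]
which follows from $\langle w_1-w_2\rangle^2\leq 2(\langle w_1\rangle^2+\langle w_2\rangle^2)$ together with the bound $(a+b)^{s/2}\lesssim a^{s/2}+b^{s/2}$ for $a,b\geq0$. Applying it with $w_1=z+\tfrac{\tilde\zeta}{2}$ and $w_2=z-\tfrac{\tilde\zeta}{2}$, and using $(A+B)^{q/p}\lesssim A^{q/p}+B^{q/p}$ (recall $q/p\geq1$), the quantity $\|W(f_1,f_2)\|^p_{M^{p,q}_{1\otimes v_s}}$ is dominated by a sum of two terms in each of which the weight $v_s^p$ is attached to exactly one of the two factors $|V_g f_2|^p$, $|V_g f_1|^p$.

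Each of these two terms then has precisely the structure handled in the proof of Theorem \ref{wigestp}: after the change of variables $z\mapsto z-\tfrac{\tilde\zeta}{2}$ the first becomes the (now unweighted) convolution $|V_g f_2|^p v_s^p\ast |(V_g f_1)^*|^p$ measured in $L^{q/p}$, and the second becomes $|V_g f_2|^p\ast |(V_g f_1)^*|^p v_s^p$, where I used the evenness of $v_s$. To each I would apply Young's inequality for mixed-norm spaces with the same admissible exponents $r_i=p_i/p$, $s_i=q_i/p$ dictated by \eqref{WIR}--\eqref{Wigindexsharp}; taking $p$-th roots and using subadditivity of $t\mapsto t^{1/p}$ yields
\[
\|W(f_1,f_2)\|_{M^{p,q}_{1\otimes v_s}}\lesssim \|f_1\|_{M^{p_1,q_1}}\|f_2\|_{M^{p_2,q_2}_{v_s}}+\|f_1\|_{M^{p_1,q_1}_{v_s}}\|f_2\|_{M^{p_2,q_2}},
\]
as claimed. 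The remaining cases are disposed of exactly as in Theorem \ref{wigestp}: for $p=q=\infty$ one splits the supremum directly via the same additive estimate, while for $p>q$ one reduces to $p=q$ through the inclusion relations \eqref{modspaceincl1}.

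I do not expect a genuine obstacle, since the statement is really a weighted refinement of Theorem \ref{wigestp}; the one thing to emphasize is that the additive splitting of $v_s$ is available \emph{only} for $s\geq0$, which is exactly why the sharper factorized bound holds in this range, whereas for $s<0$ the decreasing weight forces the multiplicative moderate splitting and hence the less symmetric estimate \eqref{wigest} with $v_{|s|}$. The only bookkeeping point is to check that the exponents $r_i,s_i$ chosen in Young's inequality are simultaneously admissible for both terms, which is immediate because the index constraints \eqref{WIR}--\eqref{Wigindexsharp} do not involve the weight and are identical for the two summands.
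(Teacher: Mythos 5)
Your proposal is correct and follows essentially the same route as the paper: the paper's proof also runs the argument of Theorem \ref{wigestp} verbatim, replacing the multiplicative moderateness bound $v_s(z)\lesssim v_{|s|}(z-w)v_s(w)$ by the additive splitting $v_s(z)\lesssim v_s(z-w)+v_s(w)$ (valid for $s\geq 0$, applied with $sp$ in place of $s$) inside the weighted convolution norm \eqref{e1}. Whether one splits the weight before or after the change of variables $z\mapsto z-\tilde{\zeta}/2$ is immaterial, so the two arguments coincide.
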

\begin{proof}  
The proof is similar to that of Theorem \ref{wigestp}, but in this case from the estimate \eqref{e1} we proceed by using
\[
v_{s}(z)\lesssim v_s(z-w)+v_s(w),\quad s\geq 0 
\]
(with $sp$ in place of $s$) instead of $ v_{s}(z)\lesssim v_{|s|}(z-w)v_s(w)$.
\end{proof}

If in particular we consider the Wigner distribution $W(f,f)$, then Theorem \ref{wigestpbis} can be rephrased as follows.
\begin{corollary}\label{Wignerdistr}
	Assume $s\geq0$, $p_1,q_1,p,q \in [1,\infty]$ such that
	\begin{equation*}
 2\min \{ \frac 1{p_1},\frac 1{q_1}\}\geq \frac 1p +\frac 1 q.
	\end{equation*}
	 If $f\in M^{p_1,q_1}_{v_s}(\rd)$, then the Wigner distribution  $W(f,f)$ is in $ M^{p,q}_{1\otimes v_{s}}(\rdd)$, with
	\begin{equation}\label{Wigner}
	\|W(f,f)\|_{M^{p,q}_{1\otimes v_{s}}(\rdd)}\lesssim \|f\|_{M^{p_1,q_1}(\rd)}\|f\|_{M^{p_1,q_1}_{v_s}(\rd)}.
	\end{equation}
\end{corollary}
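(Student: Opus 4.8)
The plan is to obtain Corollary~\ref{Wignerdistr} as the diagonal specialization of Theorem~\ref{wigestpbis}. Concretely, I would apply that theorem with $f_1=f_2=f$ and with the symmetric choice of indices $p_2=p_1$, $q_2=q_1$, and then simply match hypotheses and conclusion.

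First I would check that the admissibility conditions of Theorem~\ref{wigestpbis} collapse correctly under $p_2=p_1$, $q_2=q_1$. The two inequalities in \eqref{Wigindexsharp}, namely $\frac{1}{p_1}+\frac{1}{p_2}\ge\frac1p+\frac1q$ and $\frac{1}{q_1}+\frac{1}{q_2}\ge\frac1p+\frac1q$, become $\frac{2}{p_1}\ge\frac1p+\frac1q$ and $\frac{2}{q_1}\ge\frac1p+\frac1q$; taken together these are exactly the single hypothesis $2\min\{\frac{1}{p_1},\frac{1}{q_1}\}\ge\frac1p+\frac1q$ of the corollary (cf.\ \eqref{Ch}). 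Similarly \eqref{WIR} becomes $p_1,q_1\le q$, which I would retain as the remaining admissibility requirement, and the sign hypothesis $s\ge0$ is common to both statements. Thus all hypotheses of Theorem~\ref{wigestpbis} are in force.

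Next I would read off the conclusion. Theorem~\ref{wigestpbis} yields $W(f,f)\in M^{p,q}_{1\otimes v_s}(\rdd)$ with
\[
\|W(f,f)\|_{M^{p,q}_{1\otimes v_s}}\lesssim \|f\|_{M^{p_1,q_1}}\|f\|_{M^{p_1,q_1}_{v_s}}+\|f\|_{M^{p_1,q_1}_{v_s}}\|f\|_{M^{p_1,q_1}}.
\]
The two summands on the right-hand side are identical, so they coalesce into the single product $\|f\|_{M^{p_1,q_1}}\|f\|_{M^{p_1,q_1}_{v_s}}$ (the factor $2$ disappearing into the implicit constant), which is precisely the bound \eqref{Wigner}.

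I do not anticipate any genuine analytic obstacle, since all of the substantive work---the STFT identity \eqref{e0}, the mixed-norm Young inequality leading to \eqref{e1}, and the subadditive splitting $v_s(z)\lesssim v_s(z-w)+v_s(w)$ valid for $s\ge0$---is already packaged inside Theorem~\ref{wigestpbis}. The only point demanding a little care is the index bookkeeping: verifying that the symmetric choice $p_1=p_2$, $q_1=q_2$ really does turn the pair of inequalities in \eqref{Wigindexsharp} into the single minimum condition, and keeping track that the residual condition $p_1,q_1\le q$ inherited from \eqref{WIR} is respected. I expect this to be a routine verification rather than a real difficulty.
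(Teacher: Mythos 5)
Your proof is correct and is precisely the paper's own argument: the paper introduces Corollary \ref{Wignerdistr} as the diagonal rephrasing of Theorem \ref{wigestpbis} (take $f_1=f_2=f$, $p_2=p_1$, $q_2=q_1$), and the two summands in the conclusion of that theorem coalesce into the single product exactly as you describe. The one substantive point is the condition $p_1,q_1\le q$ that you retain from \eqref{WIR}: the corollary's statement omits it, but it is genuinely needed, since it does not follow from $2\min\{1/p_1,1/q_1\}\ge \frac1p+\frac1q$ (for instance $p=\infty$, $q=2$, $p_1=q_1=3$ satisfies \eqref{Ch} but not \eqref{WIR}), and it cannot be dispensed with: the diagonal estimate implies the cross estimate $\|W(f_1,f_2)\|_{M^{p,q}}\lesssim \|f_1\|_{M^{p_1,q_1}}\|f_2\|_{M^{p_1,q_1}}$ via polarization together with the scaling invariance $W(tf_1,t^{-1}f_2)=W(f_1,f_2)$, $t>0$, and the necessity part of Theorem \ref{T1} then forces \eqref{WIR}. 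So your insistence on keeping $p_1,q_1\le q$ is a correction to the statement rather than a defect of your argument, and with that hypothesis in place your proof is complete and coincides with the paper's.
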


We now prove the sharpness of Theorem \ref{wigestp} (and Corollary \ref{Wignerdistr}) in the un-weighted case $s=0$.
\begin{theorem}[\bf Necessary Conditions] Consider $p_1,p_2,q_1,q_2,p,q\in [1,\infty]$. Assume that there exists a constant $C>0$ such that
	\begin{equation}\label{Wigestsharp2}
	\|W(f_1,f_2)\|_{M^{p,q}}\leq C \|f_1\|_{M^{p_1,q_1}} \|f_2\|_{M^{p_2,q_2}},\quad \forall f_1,f_2\in\cS(\rdd),
	\end{equation}
	then \eqref{WIR} and \eqref{Wigindexsharp} must hold.
\end{theorem}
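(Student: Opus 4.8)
The plan is to test the putative bilinear estimate \eqref{Wigestsharp2} against explicit Gaussian families, for which every relevant modulation norm has already been computed in Section 2, and to read off the two conditions by matching the asymptotic behaviour of the two sides as a dilation parameter $\lambda$ tends to $0^+$ and to $+\infty$. Since the Gaussian $\varphi$, its rescalings $\varphi_\lambda$, and the associated (cross-)Wigner distributions all lie in $\cS$, the estimate \eqref{Wigestsharp2} applies to them. The structural observation that organizes the whole argument is that \eqref{WIR} and \eqref{Wigindexsharp} are of genuinely different nature and must be extracted from two different families: dilating a single entry probes the individual indices $p_i,q_i$ against $q$, whereas the summation conditions \eqref{Wigindexsharp} only surface when both entries are dilated simultaneously.

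First I would establish \eqref{WIR}. Taking $f_1=\varphi$ (whose norm $\|\varphi\|_{M^{p_1,q_1}}$ is a finite positive constant, $\varphi$ being Schwartz) and $f_2=\varphi_\lambda$, estimate \eqref{Wigestsharp2} reads $\|W(\varphi,\varphi_\lambda)\|_{M^{p,q}}\lesssim \|\varphi_\lambda\|_{M^{p_2,q_2}}$. The left-hand side is Corollary \ref{ee7}, whose two regimes are $\|W(\varphi,\varphi_\lambda)\|_{M^{p,q}}\asymp\lambda^{-d/(2q)}$ as $\lambda\to0^+$ and $\asymp\lambda^{d/(2q)-d/2}$ as $\lambda\to+\infty$; the right-hand side, by a direct computation of the STFT of the dilated Gaussian analogous to those of Section 2, satisfies $\|\varphi_\lambda\|_{M^{p_2,q_2}}\asymp\lambda^{-d/(2p_2)}$ as $\lambda\to0^+$ and $\asymp\lambda^{-d/2+d/(2q_2)}$ as $\lambda\to+\infty$. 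Since $\lambda^A\lesssim\lambda^B$ forces $A\ge B$ as $\lambda\to0^+$ and $A\le B$ as $\lambda\to+\infty$, the two limits yield exactly $p_2\le q$ and $q_2\le q$. The inequalities $p_1\le q$, $q_1\le q$ then follow by symmetry: from $W(f_2,f_1)=\overline{W(f_1,f_2)}$ and the invariance of the unweighted modulation norm under conjugation (using a real even window), one has $\|W(f_1,f_2)\|_{M^{p,q}}=\|W(f_2,f_1)\|_{M^{p,q}}$, so the same computation with the roles of $f_1,f_2$ interchanged gives the missing two bounds, completing \eqref{WIR}.

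Next I would prove \eqref{Wigindexsharp} by dilating both entries equally, i.e. $f_1=f_2=\varphi_\lambda$. Here the left-hand side of \eqref{Wigestsharp2} is $\|W(\varphi_\lambda,\varphi_\lambda)\|_{M^{p,q}}$; by the dilation law $W(\varphi_\lambda,\varphi_\lambda)(x,\xi)=\lambda^{-d/2}W(\varphi,\varphi)(\sqrt\lambda\,x,\xi/\sqrt\lambda)$ this is again a generalized Gaussian of the form \eqref{GG} (with $a=2\lambda$, $b=2/\lambda$, $c=0$), so its modulation norm is delivered by formula \eqref{MODGG}. A short simplification gives $\|W(\varphi_\lambda,\varphi_\lambda)\|_{M^{p,q}}\asymp\lambda^{-d/(2p)-d/(2q)}$ as $\lambda\to0^+$ and $\asymp\lambda^{-d+d/(2p)+d/(2q)}$ as $\lambda\to+\infty$, whereas the right-hand side $\|\varphi_\lambda\|_{M^{p_1,q_1}}\|\varphi_\lambda\|_{M^{p_2,q_2}}$ behaves like $\lambda^{-d/(2p_1)-d/(2p_2)}$ as $\lambda\to0^+$ and like $\lambda^{-d+d/(2q_1)+d/(2q_2)}$ as $\lambda\to+\infty$. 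Matching exponents as before, the limit $\lambda\to0^+$ forces $\frac1p+\frac1q\le\frac1{p_1}+\frac1{p_2}$ and the limit $\lambda\to+\infty$ forces $\frac1p+\frac1q\le\frac1{q_1}+\frac1{q_2}$, which are precisely the two inequalities \eqref{Wigindexsharp}.

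The main obstacle is not any single computation, since all the Gaussian norms are already at hand, but the correct design of the test families. One must recognize that a single dilated factor cannot detect \eqref{Wigindexsharp} (the summation thresholds are invisible unless both time-frequency scales contract or dilate together), while the symmetric family $f_1=f_2=\varphi_\lambda$ cannot separate the individual constraints of \eqref{WIR}; hence both families are indispensable. The delicate point inside each case is the transition of the Gaussian modulation norm near $\lambda\sim1$, whose behaviour is governed by the exponent $p$ as $\lambda\to0^+$ and by $q$ as $\lambda\to+\infty$; keeping track of which regime produces which inequality, and in which direction, is exactly what pins down the sharp conditions.
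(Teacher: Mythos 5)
Your proposal is correct and follows essentially the same route as the paper: testing \eqref{Wigestsharp2} on the one-parameter family $W(\varphi,\varphi_\lambda)$ (with the conjugation symmetry $W(\varphi_\lambda,\varphi)=\overline{W(\varphi,\varphi_\lambda)}$ to transfer the conclusion to $p_1,q_1$) yields \eqref{WIR}, and testing on $W(\varphi_\lambda,\varphi_\lambda)$ with the limits $\lambda\to 0^+$ and $\lambda\to+\infty$ yields \eqref{Wigindexsharp}, exactly as in the paper's proof. The only cosmetic difference is that you evaluate $\|W(\varphi_\lambda,\varphi_\lambda)\|_{M^{p,q}}$ via the generalized Gaussian formula \eqref{MODGG}, while the paper uses the tensor-product identity \eqref{Wiggauss} together with the known norms of dilated Gaussians; both give the same asymptotics.
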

\begin{proof}	
Let us first demonstrate the necessity of \eqref{Wigindexsharp}.  We consider the dilated Gaussians $\f_\lambda(x)=\f(\sqrt{\lambda} x)$, with $\f(x)=e^{-\pi x^2}$.

An easy computation (see also \cite[formula (4.20)]{book} \eqref{Wigindexsharp}) shows that
\begin{equation}\label{Wiggauss}
    W(\f_\lambda,\f_\lambda)\phas=2^{\frac d2} \lambda^{-\frac d2}\f_{ 2 \lambda}(x)\f_{{2} /\lambda}(\o).
\end{equation}
Now (see \cite[Lemma 3.2]{cordero2}, \cite[Lemma 1.8]{toft1})
$$ \|\f_\lambda\|_{M^{r,s}}\asymp \lambda^{-\frac d {2r}}(\lambda+1)^{-\frac d2(1-\frac1q-\frac1p)}$$
and observe that
$$\| W(\f_\lambda,\f_\lambda)\|_{M^{p,q}(\rdd)}= 2^{\frac d2} \lambda^{-\frac d2}\|\f_{ 2 \lambda}\|_{M^{p,q}(\rd)}\|\f_{ {2} /\lambda}\|_{M^{p,q}(\rd)}.
$$
The assumption \eqref{Wigestsharp} in this case becomes
$$\lambda^{-\frac d2}(\lambda+1)^{-\frac d2(1-\frac1q-\frac1p)}(\lambda^{-1}+1)^{-\frac d2(1-\frac1q-\frac1p)}\lesssim \lambda^{-\frac d{2p_1}}(1+\lambda)^{-\frac d2(1-\frac1{q_1}-\frac1{p_1})}\lambda^{-\frac d{2p_2}}(1+\lambda)^{-\frac d2(1-\frac1{q_2}-\frac1{p_2})}
$$
and letting $\lambda\to+\infty$ we obtain
$$ \frac1p+\frac1q\leq \frac1{q_1}+\frac1{q_2}
$$
whereas for $\lambda\to 0^+$
$$ \frac1p+\frac1q\leq \frac1{p_1}+\frac1{p_2},
$$
so that \eqref{Wigindexsharp} must hold.\par

It remains to prove the sharpness of \eqref{WIR}. We first show the conditions $p_2,q_2\leq q$.
We test \eqref{Wigestsharp2} on the (cross-)Wigner distribution $W(\f,\f_\lambda)$ defined in \eqref{Wgausslambda}, that is
$$\|W(\f,\f_\lambda)\|_{M^{p,q}(\rdd)}\lesssim \|\f\|_{M^{p_1,q_1}(\rd)}\|\f_\lambda\|_{M^{p_2,q_2}(\rd)}.
$$
Using Corollary \ref{ee7} the previous estimate can be rephrased as
$$\frac{[(2\lambda+1)(\lambda+2)]^{\frac{d}{2q}-\frac{d}{2p}}}{\lambda^\frac{d}{2q}(1+\lambda)^{\frac{d}{2}-\frac dp}}\lesssim \lambda^{-\frac d{2p_2}}\lambda^{-\frac d2(1-\frac1{q_2}-\frac1{p_2})},\quad \forall\lambda>0.
$$
Letting $\lambda\to+\infty$ we attain
$$ q_2\leq q
$$
whereas for  $\lambda\to 0^+$ 
$$ p_2\leq q.$$
The conditions $p_1,q_1\leq q$ then follows by using  the cross-Wigner property $$W(\f_\lambda,\f)\phas=\overline{W(\f,\f_\lambda)\phas},$$ so that 
$$\|W(\f_\lambda,\f)\|_{M^{p,q}(\rdd)}=\|\overline{W(\f,\f_\lambda)}\|_{M^{p,q}(\rdd)}=\|W(\f,\f_\lambda)\|_{M^{p,q}(\rdd)}$$
 and applying the same argument as before. 
\end{proof}\par
\medskip
\section{Continuity results for the short-time Fourier transform and the ambiguity distribution}
This optimal bounds in Theorem \ref{T1} for the Wigner distribution can be translated in optimal new estimates for other time-frequency representations such that the STFT or the ambiguity function. Precisely, 
given $f,g\in\lrd$, we recall the definition of the (cross-)ambiguity function 
\begin{equation}\label{AF}
A(f_1,f_2)\phas= \intrd e^{-2\pi i t\xi} f_1(t+\frac x2)\overline{f_2(t-\frac x 2)}\,dt.
\end{equation}
It is well-known that the Wigner distribution is the symplectic Fourier transform of the ambiguity function, see e.g., \cite{Birkbis}. In other words, cf. \cite[Lemma 4.3.4]{book}, 
\begin{equation}\label{e3}
W(f_1,f_2)\phas=\cF \cU A(f_1,f_2)\phas,\quad f_1,f_2\in\lrd,
\end{equation}
where the operator $\cU$ is the rotation $\cU F\phas=F(\o,-x)$ of a function $F$ on $\rdd$.
We need the following norm equivalence.
\begin{lemma} For $s\in\bR$, $1\leq p,q\leq \infty$,
 the following equivalence holds
$$\|W(f_1,f_2)\|_{M^{p,q}_{1\otimes v_s}}=\|A(f_1,f_2)\|_{W(\cF L^p,L^q_{v_s})}\asymp 
 \|V_{f_2}f_1\|_{W(\cF L^p,L^q_{v_s})}.
$$
\end{lemma}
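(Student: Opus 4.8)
The plan is to derive both relations from the two structural identities already available: the symplectic--Fourier link \eqref{e3}, namely $W(f_1,f_2)=\cF\cU A(f_1,f_2)$, and the transfer rule \eqref{MW}, $\|h\|_{M^{p,q}_{u\otimes w}}=\|\cF h\|_{W(\cF L^p_u,L^q_w)}$, together with an elementary chirp identity relating $A$ and the STFT. Throughout I would work with the STFT definition of the two norms and exploit window--independence to move freely between windows.

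For the first equality I would set $W(f_1,f_2)=\cF G$ with $G=\cU A(f_1,f_2)$ and apply \eqref{MW} to $h=\cF G$, obtaining
\[
\|W(f_1,f_2)\|_{M^{p,q}_{1\otimes v_s}}=\|\cF^2 G\|_{W(\cF L^p,L^q_{v_s})}=\|PG\|_{W(\cF L^p,L^q_{v_s})},
\]
where $P=\cF^2$ is the parity operator. A direct computation gives $PG\phas=A(f_1,f_2)(-\xi,x)=A(f_1,f_2)(R\phas)$, so $PG$ is the composition of $A(f_1,f_2)$ with the $90^\circ$ phase--space rotation $R\phas=(-\xi,x)\in O(2d)$. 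It then remains to verify that the amalgam norm on $\rdd$ is invariant under composition with an orthogonal change of variables. Using the covariance formula $|V_\Psi(A\circ R)(z,\zeta)|=|V_{\Psi\circ R^{-1}}A(Rz,R\zeta)|$ and the unit--Jacobian substitutions $\eta=R\zeta$ and $z'=Rz$, this reduces to two facts: the inner $\zeta$--integration is \emph{unweighted} ($u\equiv 1$) and hence insensitive to the rotation, and the outer weight $v_s(z)=\langle z\rangle^s$ is \emph{radial}, so $v_s(R^{-1}z')=v_s(z')$. Choosing a radial Gaussian window makes $\Psi\circ R^{-1}=\Psi$, so no window change occurs and the equality is \emph{exact}, as stated.

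For the second relation I would first record the chirp identity, obtained from \eqref{AF} and \eqref{eqi2} by the substitution $s=t+\tfrac{x}{2}$, namely $A(f_1,f_2)\phas=e^{\pi i x\xi}\,V_{f_2}f_1\phas$. Although $|A(f_1,f_2)|=|V_{f_2}f_1|$ pointwise, this is not enough, since the amalgam norm is computed from the STFT of $A$, not from its modulus. The key step is therefore to show that multiplication by the chirp $\chi\phas=e^{\pi i x\xi}$ preserves $W(\cF L^p,L^q_{v_s})$ up to equivalence. Expanding $\chi(y)$ about the base point $z$ and absorbing the quadratic remainder into the modified window $\Psi'=\overline{\chi}\,\Psi$, I expect the identity $|V_\Psi(\chi F)(z,\zeta)|=|V_{\Psi'}F(z,\widetilde\zeta)|$ with $\widetilde\zeta=(\zeta_1-\tfrac{z_2}{2},\zeta_2-\tfrac{z_1}{2})$; that is, the chirp induces only a $z$--dependent shear of the frequency variable and leaves the time variable $z$ untouched. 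Since the inner integral runs over the whole frequency variable with no weight, the substitution $\zeta\mapsto\widetilde\zeta$ (unit Jacobian for fixed $z$) removes the shear, while the outer $z$--weight is unaffected; with $F=V_{f_2}f_1$ this yields $\|A(f_1,f_2)\|_{W}\asymp\|V_{f_2}f_1\|_{W}$. Here the window $\Psi'$ is a chirped Gaussian rather than a radial one, so the window genuinely changes — which is exactly why only an equivalence ($\asymp$), and not an equality, is available in this second relation.

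The main obstacle in both parts is the same: controlling the action on the STFT of the two non--translation, non--modulation symmetries at play, the $90^\circ$ rotation in the first relation and the chirp (a shear) in the second. In each case the decisive feature that makes the argument close is that $W(\cF L^p,L^q_{v_s})$ carries \emph{no weight in the frequency variable} and a \emph{radial} weight in the time variable, so that rotations and shears acting on the frequency coordinate are absorbed by the unweighted inner integral. I would handle $1\le p,q\le\infty$ uniformly, with the obvious modifications when $p=\infty$ or $q=\infty$, and invoke window--independence to pass between the windows ($\Phi$, $\Psi$, $\Psi'$, $\Psi\circ R^{-1}$) generated along the way.
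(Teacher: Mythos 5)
Your proposal is correct, and for the first equality it follows essentially the paper's own route: both arguments combine the identity \eqref{e3} with the transfer rule \eqref{MW} and then remove the leftover phase--space rotation using that the inner (frequency) integral of the $W(\cF L^p,L^q_{v_s})$--norm is unweighted and that $v_s$ is radial. The paper compresses this last step into the phrase ``symmetry of the weights $v_s$'', while your covariance formula $|V_\Psi(A\circ R)(z,\zeta)|=|V_{\Psi\circ R^{-1}}A(Rz,R\zeta)|$, combined with a rotation--invariant Gaussian window, is precisely the computation that justifies it. The second relation is where you genuinely diverge. The paper, after the same chirp identity $A(f_1,f_2)\phas=e^{\pi ix\xi}V_{f_2}f_1\phas$, cites \cite[Proposition 3.2]{CDNACHA2016} to place the chirp $F\phas=e^{\pi ix\xi}$ in $W(\cF L^1,L^\infty)(\rdd)$ and concludes with the pointwise product property $W(\cF L^1,L^\infty)\cdot W(\cF L^p,L^q_{v_s})\hookrightarrow W(\cF L^p,L^q_{v_s})$, applied to both $F$ and $\overline{F}$ to get the two--sided bound. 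You instead prove the needed multiplier property by hand: your factorization of $\chi(y)=e^{\pi iy_1y_2}$ about the base point $z$ is correct (indeed $\chi(y)=\chi(z)\chi(y-z)e^{\pi i(z_1y_2+z_2y_1)}e^{-2\pi iz_1z_2}$) and yields exactly $|V_\Psi(\chi F)(z,\zeta)|=|V_{\overline{\chi}\Psi}F(z,\widetilde\zeta)|$ with $\widetilde\zeta=(\zeta_1-\tfrac{z_2}{2},\zeta_2-\tfrac{z_1}{2})$, so the chirp acts as a $z$--dependent frequency translation, absorbed by the unweighted inner integral, plus the window change $\Psi\mapsto\overline{\chi}\Psi$, which costs only the constants of window independence since $\overline{\chi}\Psi$ is again an admissible nonzero Schwartz window. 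Your argument is self--contained and more elementary --- it avoids both the external embedding and the amalgam product estimates --- and it makes transparent why the second relation is only an equivalence $\asymp$ rather than an equality; what the paper's route buys is brevity and a reusable, more general statement (boundedness of multiplication by an arbitrary element of $W(\cF L^1,L^\infty)$, not just by this particular chirp).
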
\begin{proof}
Let us observe that the weight $v_s$, $s\in\bR$,  is symmetric in each coordinate:
$$v_s\phas=v_s(x,-\o)=v_s(-x,\o)=v_s(-x,-\o).
$$
 Using \eqref{e3}, the connection between modulation and Wiener amalgam spaces \eqref{MW} and the symmetry of the weights $v_s$ we can write
\begin{align*}
\|W(f_1,f_2)\|_{M^{p,q}_{1\otimes v_s}}&=\|\cF \cU A(f_1,f_2)\|_{M^{p,q}_{1\otimes v_s}}=\|\cU A(f_1,f_2)\|_{W(\cF L^p,L^q_{v_s})}\\
&=\| A(f_1,f_2)\|_{W(\cF L^p,L^q_{v_s})}.
\end{align*}
Now a simple change of variables in \eqref{AF} let us write
$$A(f_1,f_2)\phas= e^{\pi i x\xi} V_{f_2}f_1\phas.
$$
It was proved in \cite[Proposition 3.2]{CDNACHA2016} that the function $F\phas=e^{\pi i x\xi}$ is in the Wiener amalgam space $W(\cF L^1, L^\infty)$.  This means that, by the product properties for Wiener amalgam spaces, for every $s\in\bR$,
$$\|A(f_1,f_2)\|_{W(\cF L^p,L^q_{v_s})}\lesssim \|F\|_{W(\cF L^1,L^\infty)} \|V_{f_2}f_1\|_{W(\cF L^p,L^q_{v_s})}
$$
and since $\bar{F}\phas=e^{-\pi i x\xi}\in W(\cF L^1, L^\infty)$ as well, with 
$\|\bar{F}\|_{W(\cF L^1,L^\infty)}=\|F\|_{W(\cF L^1,L^\infty)}$,  we can analogously write
$$ \|V_{f_2}f_1\|_{W(\cF L^p,L^q_{v_s})}\lesssim \|F\|_{W(\cF L^1,L^\infty)} \|A(f_1,f_2)\|_{W(\cF L^p,L^q_{v_s})}.
$$ This proves the desired result.
\end{proof}

This observations, together with the Wigner property $W(f_1,f_2)\phas=\overline{W(f_2,f_1)}$ let us translate Theorem \ref{T1} in terms of STFT acting from modulation spaces to Wiener amalgam spaces. Notice that the following two corollaries also hold  for the ambiguity function $A(f_1,f_2)$ in place  of the STFT $V_{f_1}f_2$.
\begin{corollary}\label{e6}
Consider $s\in\bR$ and assume that $p_1,p_2,q_1,q_2,p,q\in [1,\infty]$ satisfy conditions \eqref{WIR} and \eqref{Wigindexsharp}. Then if $f_1\in M^{p_1,q_1}_{v_{|s|}}(\rd)$ and $f_2\in M^{p_2,q_2}_{v_s}(\rd)$, we have $V_{f_1}f_2\in W(\cF L^p,L^q_{v_s})(\rdd)$ with
\begin{equation}\label{Wigest2}
\|V_{f_1}f_2\|_{W(\cF L^p,L^q_{v_s})(\rdd)}\lesssim \|f_ 1\|_{M^{p_1,q_1}_{v_s}(\rd)} \|f_2\|_{M^{p_2,q_2}_{v_s}(\rd)}.
\end{equation}
 Viceversa, assume that there exists a constant $C>0$ such that
\begin{equation}\label{STFTsharp}
\|V_{f_1}f_2\|_{W(\cF L^p,L^q)(\rdd)}\leq C \|f_1\|_{M^{p_1,q_1}(\rd)} \|f_2\|_{M^{p_2,q_2}(\rd)},\quad \forall f_1,f_2\in\cS(\rdd).
\end{equation}
Then \eqref{WIR} and \eqref{Wigindexsharp} must hold.
\end{corollary}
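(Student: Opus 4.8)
The plan is to treat this corollary as a direct translation of Theorem \ref{T1} through the norm equivalence just established in the previous Lemma, combined with the elementary symmetry $W(f_1,f_2)=\overline{W(f_2,f_1)}$. The single identity driving everything is obtained by applying that Lemma with the roles of $f_1$ and $f_2$ interchanged: it reads $\|W(f_2,f_1)\|_{M^{p,q}_{1\otimes v_s}}\asymp \|V_{f_1}f_2\|_{W(\cF L^p,L^q_{v_s})}$. Since the weight $v_s$ is even and conjugation does not affect the absolute value of an STFT, the Wigner symmetry gives $\|W(f_2,f_1)\|_{M^{p,q}_{1\otimes v_s}}=\|W(f_1,f_2)\|_{M^{p,q}_{1\otimes v_s}}$, so that
\[
\|V_{f_1}f_2\|_{W(\cF L^p,L^q_{v_s})}\asymp \|W(f_1,f_2)\|_{M^{p,q}_{1\otimes v_s}}.
\]

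For the sufficiency part I would simply insert the upper bound of Theorem \ref{wigestp} (the sufficient half of Theorem \ref{T1}) into the right-hand side of this equivalence. Under \eqref{WIR} and \eqref{Wigindexsharp} that theorem yields $\|W(f_1,f_2)\|_{M^{p,q}_{1\otimes v_s}}\lesssim \|f_1\|_{M^{p_1,q_1}_{v_{|s|}}}\|f_2\|_{M^{p_2,q_2}_{v_s}}$, which together with the displayed equivalence gives \eqref{Wigest2} (the weights $v_{|s|}$ and $v_s$ coinciding when $s\geq0$).

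For the necessity part I would run the same dictionary in reverse at $s=0$. Assuming \eqref{STFTsharp}, the unweighted version of the equivalence converts it into the Wigner bound \eqref{Wigestsharp2}, namely $\|W(f_1,f_2)\|_{M^{p,q}}\lesssim \|f_1\|_{M^{p_1,q_1}}\|f_2\|_{M^{p_2,q_2}}$ for all $f_1,f_2\in\cS$. At this point the necessary half of Theorem \ref{T1} applies directly and forces \eqref{WIR} and \eqref{Wigindexsharp}.

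The argument is essentially bookkeeping rather than analysis, so the one delicate point I would check most carefully is the correct matching of the window and the signal in the norm equivalence: the Lemma must be invoked with $f_1$ as the \emph{window} of $V_{f_1}f_2$, hence the swap to $W(f_2,f_1)$, after which the Wigner symmetry restores $W(f_1,f_2)$. The identical chain of identities, now using the relation $A(f_1,f_2)\phas=e^{\pi i x\xi}V_{f_2}f_1\phas$ together with the fact that $e^{\pi i x\xi}\in W(\cF L^1,L^\infty)$ acts as a multiplier, yields the corresponding statement for the ambiguity function with no extra work.
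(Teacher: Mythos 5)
Your proof is correct and matches the paper's own (implicit) argument: the paper derives Corollary \ref{e6} exactly as you do, by invoking the norm equivalence of the preceding Lemma with the roles of $f_1,f_2$ swapped, using the symmetry $W(f_1,f_2)=\overline{W(f_2,f_1)}$ (harmless since the weight $1\otimes v_s$ is even and the Gaussian window is real), and then applying the sufficient and necessary halves of Theorem \ref{T1}. Your parenthetical about $v_{|s|}$ versus $v_s$ correctly flags what is merely a notational slip in the paper's statement of \eqref{Wigest2}.
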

\vskip0.5truecm

The previous result has many special and interesting cases. Let us just give a flavour of the main important ones. For $p_i=q_i$, $i=1,2$,  we obtain the following result.
\begin{corollary}\label{e8}
	Assume that $p_1,p_2,p,q\in [1,\infty]$ satisfy 
	\begin{equation}\label{e5}
p_1,p_2\leq q,\quad \frac1{p_1}+\frac1{p_2}\geq \frac1p+\frac1q.
	\end{equation}  Then, for $s\in\bR$,  if $f_1\in M^{p_1}_{v_{|s|}}(\rd)$ and $f_2\in M^{p_2}_{v_s}(\rd)$  we have $V_{f_1}f_2\in W(\cF L^p,L^q_{v_s})(\rdd)$ with
	\begin{equation}\label{Wigest2p}
	\|V_{f_1}f_2\|_{W(\cF L^p,L^q_{v_s})(\rdd)}\lesssim \|f_ 1\|_{M^{p_1}_{ v_{|s|}}(\rd)} \|f_2\|_{M^{p_2}_{ v_s}(\rd)}.
	\end{equation}
	\par Viceversa, assume that there exists a constant $C>0$ such that
	\begin{equation}\label{WigestsharpSTFT}
	\|V_{f_1}f_2\|_{W(\cF L^p,L^q)(\rdd)}\leq C \|f_1\|_{M^{p_1}(\rd)} \|f_2\|_{M^{p_2}(\rd)},\quad \forall f_1,f_2\in\cS(\rdd).
	\end{equation}
	Then \eqref{e5} must hold.
\end{corollary}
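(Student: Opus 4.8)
The plan is to deduce both halves of Corollary~\ref{e8} from Corollary~\ref{e6} by specializing to the diagonal $q_1=p_1$, $q_2=p_2$; no fresh machinery is required. The first thing I would record is that under this specialization the hypotheses match exactly: condition \eqref{WIR}, namely $p_i,q_i\leq q$, collapses to $p_1,p_2\leq q$, while the two inequalities in \eqref{Wigindexsharp} both become $\tfrac1{p_1}+\tfrac1{p_2}\geq\tfrac1p+\tfrac1q$. Thus the pair \eqref{WIR}--\eqref{Wigindexsharp} is, on the diagonal, precisely condition \eqref{e5}.

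For the forward implication I would use $M^{p,p}_m=M^p_m$ to read $f_1\in M^{p_1}_{v_{|s|}}$ and $f_2\in M^{p_2}_{v_s}$ as elements of $M^{p_1,p_1}_{v_{|s|}}(\rd)$ and $M^{p_2,p_2}_{v_s}(\rd)$, respectively, and then apply the forward part of Corollary~\ref{e6}: since \eqref{e5} supplies \eqref{WIR} and \eqref{Wigindexsharp} with $q_i=p_i$, we obtain $V_{f_1}f_2\in W(\cF L^p,L^q_{v_s})(\rdd)$ together with the estimate \eqref{Wigest2}, which is exactly \eqref{Wigest2p}. The only bookkeeping point is the weight on the $f_1$ factor: to display $\|f_1\|_{M^{p_1}_{v_{|s|}}}$ as in \eqref{Wigest2p} it suffices to invoke the pointwise bound $v_s\le v_{|s|}$, hence $\|f_1\|_{M^{p_1}_{v_s}}\lesssim\|f_1\|_{M^{p_1}_{v_{|s|}}}$, which only enlarges the right-hand side.

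For the converse I would specialize the necessity part of Corollary~\ref{e6} verbatim. The assumption \eqref{WigestsharpSTFT} is nothing but \eqref{STFTsharp} read with $q_1=p_1$ and $q_2=p_2$; the converse of Corollary~\ref{e6} then forces \eqref{WIR} and \eqref{Wigindexsharp}, which by the index computation of the first paragraph are equivalent to \eqref{e5}. In particular there is no need to re-run any Gaussian test argument, because the necessary conditions coming out of Corollary~\ref{e6} already treat $p_i$ and $q_i$ as independent parameters and hence remain valid after setting $q_i=p_i$. Accordingly there is no genuine obstacle in this corollary: its entire content is inherited from Corollary~\ref{e6} (and ultimately from Theorem~\ref{T1}), and the only thing demanding a moment's care is the weight bookkeeping $v_s$ versus $v_{|s|}$ together with the observation that the diagonal restriction fuses the two separate inequalities of \eqref{Wigindexsharp} into the single inequality \eqref{e5}.
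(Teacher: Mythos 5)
Your proposal is correct and is exactly the paper's argument: the paper obtains this corollary by specializing Corollary \ref{e6} to $q_i=p_i$, $i=1,2$, under which \eqref{WIR} and \eqref{Wigindexsharp} collapse to \eqref{e5}, with the same observation that the necessity part transfers verbatim. Your extra care with the weight bookkeeping ($v_s\le v_{|s|}$ pointwise, so replacing $\|f_1\|_{M^{p_1}_{v_s}}$ by $\|f_1\|_{M^{p_1}_{v_{|s|}}}$ only enlarges the right-hand side) is a sound way to reconcile the statement of \eqref{Wigest2} with \eqref{Wigest2p}.
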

\begin{remark}
	The previous result holds also for the cross-Wigner distribution if we replace  $\|V_{f_1}f_2\|_{W(\cF L^p,L^q_{v_s})(\rdd)}$ by 	$\|W(f_1,f_2)\|_{M^{p,q}_{1\otimes  v_s}(\rdd)}$
	\end{remark}
If we choose $s=0$, $p=q'$ and $q\geq2 $ in the previous result, we can refine  some  Lieb's integral bounds for the ambiguity function showed in \cite{Lieb}. Namely, we obtain in particular the following sufficient conditions for boundedness.
\begin{corollary}\label{e7} Assume $q\geq 2$, $p_1,p_2,q_1,q_2\leq q$ such that 
	$$\frac1{p_1}+\frac{1}{p_2}\geq 1\quad \frac1{q_1}+\frac{1}{q_2}\geq 1.
	$$
If $f_i \in M^{p_i,q_i}(\rd)$, $i=1,2$, then the ambiguity function satisfy $A(f_1,f_2)\in L^q(\rdd)$, with
$$\|A(f_1,f_2)\|_{L^q(\rdd)}\lesssim \|f_1\|_{M^{p_1,q_1}(\rd)}\|f_2\|_{M^{p_2,q_2}(\rd)}.$$
\end{corollary}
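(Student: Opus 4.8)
The plan is to deduce the statement from Corollary \ref{e6} (applied to the ambiguity function $A(f_1,f_2)$ in place of $V_{f_1}f_2$, as allowed by the remark preceding it) by specializing the exponents to $s=0$ and to the value $p=q'$ of the ``local'' integrability index, and then to pass from the Wiener amalgam space $W(\cF L^{q'},L^q)(\rdd)$ to $L^q(\rdd)$ by a Hausdorff--Young type embedding. The point is that $L^q$ is, for $q\geq 2$, a \emph{larger} space than $W(\cF L^{q'},L^q)$, so the sharp amalgam bound automatically upgrades to an $L^q$ bound.

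First I would check that, with the choices $s=0$ and $p=q'$, the hypotheses \eqref{WIR} and \eqref{Wigindexsharp} of Corollary \ref{e6} collapse exactly to the assumptions made here. Indeed \eqref{WIR} reads $p_i,q_i\leq q$ for $i=1,2$, and, since $\frac1{q'}+\frac1q=1$, the index conditions \eqref{Wigindexsharp} become
\[
\frac1{p_1}+\frac1{p_2}\geq \frac1{q'}+\frac1q=1,\qquad \frac1{q_1}+\frac1{q_2}\geq 1,
\]
which are precisely the stated hypotheses. Corollary \ref{e6} then provides the estimate
\[
\|A(f_1,f_2)\|_{W(\cF L^{q'},L^q)(\rdd)}\lesssim \|f_1\|_{M^{p_1,q_1}(\rd)}\,\|f_2\|_{M^{p_2,q_2}(\rd)}.
\]

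The remaining, and only substantial, step is the continuous inclusion $W(\cF L^{q'},L^q)(\rdd)\hookrightarrow L^q(\rdd)$, which holds precisely because $q\geq 2$. I would obtain it from the monotonicity of Wiener amalgam spaces with respect to their local component together with the identity $W(L^q,L^q)=L^q$: it suffices to establish the embedding of local components $\cF L^{q'}\hookrightarrow L^q$. This last inclusion is exactly the Hausdorff--Young inequality, valid since $q'\leq 2$ for $q\geq 2$: writing $F=\cF^{-1}\hat F$ one has $\|F\|_{L^q}=\|\cF^{-1}\hat F\|_{L^{(q')'}}\lesssim \|\hat F\|_{L^{q'}}=\|F\|_{\cF L^{q'}}$. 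Consequently
\[
W(\cF L^{q'},L^q)\hookrightarrow W(L^q,L^q)=L^q,
\]
and combining this embedding with the displayed amalgam bound yields the claim.

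I expect the main obstacle to be the clean justification of the amalgam inclusion $W(\cF L^{q'},L^q)\hookrightarrow L^q$: namely the passage from the local embedding $\cF L^{q'}\hookrightarrow L^q$ to the corresponding embedding of the full amalgam spaces, together with the identity $W(L^q,L^q)=L^q$, both of which rest on the general inclusion theory for Wiener amalgams rather than on anything specific to \tfa. As a sanity check on the threshold, the choice $p_1=p_2=q_1=q_2=2$ gives $M^{2,2}=L^2$ and recovers Lieb's sharp bound $\|A(f_1,f_2)\|_{L^q}\lesssim\|f_1\|_{L^2}\|f_2\|_{L^2}$ from \cite{Lieb}, confirming that $q\geq 2$ is the correct range.
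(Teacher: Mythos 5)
Your proof is correct and follows essentially the same route as the paper: apply Corollary \ref{e6} (in its ambiguity-function form) with $s=0$, $p=q'$, so that conditions \eqref{WIR} and \eqref{Wigindexsharp} reduce to the stated hypotheses, and then use $\cF L^{q'}\subseteq L^q$ for $q\geq 2$ together with the inclusion relations for Wiener amalgam spaces to get $W(\cF L^{q'},L^q)(\rdd)\subseteq W(L^q,L^q)(\rdd)=L^q(\rdd)$. The only difference is that you spell out the Hausdorff--Young step and the amalgam monotonicity, which the paper invokes implicitly.
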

\begin{proof}
Since $\cF L^{q'}\subseteq L^q$ for $q\geq 2$, the inclusion relations for Wiener amalgam spaces give 
$W(\cF L^{q'}, L^q)(\rdd)\subseteq W(L^q,L^q)(\rdd)=L^q(\rdd)$. The result then follows by Corollary \ref{e6}.
\end{proof}

Observe that for $p_i=q_i=2$, $i=1,2$, $M^{p_i,q_i}(\rd)=L^2(\rd)$ and we recapture 
Lieb's bound, see \cite[Theorem 1]{Lieb}. We also refer to \cite{cordero1,cordero2} for related estimates for the short-time Fourier transform.

\section{Pseudodifferential operators}\label{psdo}
In this section we apply Theorem \ref{T1} to the study of pseudodifferential operators on modulation spaces. The key tool is the weak definition of a Weyl operator $L_\sigma$ by means of a duality pairing between the symbol $\sigma$ and the cross-Wigner distribution $W(g,f)$ as shown in \eqref{equiv1}. \par
The sharpest result concerning boundedness of pseudodifferential operators on (un-weighted) modulation spaces was proved by one of us with Tabacco and Wahlberg  in \cite[Theorem 1.1]{cordero3}. Such result covers
previous sufficient boundedness conditions proved by Toft in \cite[Theorem 4.3]{toft1} and necessary boundedness conditions exhibited in our previous  work \cite[Proposition 5.3]{cordero2}.
Our result in this framework extends \cite[Theorem 1.1]{cordero3} to  weighted modulation spaces, thus widening the sufficient boundedness conditions presented by Toft in \cite[Theorem 4.3]{Toftweight}.
Using Theorem \ref{T1} the proof of the following result is decidedly simple.
\begin{theorem}\label{Charpseudo}
	Assume $s\geq0$,  $p_i,q_i,p,q\in [1,\infty]$, $i=1,2$, are such that \begin{equation}\label{indicitutti}
	\min\{\frac1{p_1} + \frac{1}{p'_2}, \frac1{q_1} +\frac{1}{q'_2}\} \geq \frac1{p'}+\frac1{q'}.
	\end{equation}
	\noindent
	and
	\begin{equation}\label{indiceq}
	\quad q \leq \min\{ p_1',q_1',p_2,q_2\}.
	\end{equation}
	Then the
	pseudodifferential operator $T$, from $\cS(\rd)$ to $\cS'(\rd)$,
	having symbol  $\sigma \in M^{p,q}_{1\otimes v_s}(\R^{2d})$, extends uniquely to
	a bounded operator from ${M}^{p_1,q_1}_{v_s}(\R^d)$ to
	${M}^{p_2,q_2}_{v_s}(\R^d)$, with the estimate
	\begin{equation}\label{stimaA}
	\|Tf\|_{{M}_{v_s}^{p_2,q_2}} \lesssim
	\|\sigma\|_{M^{p,q}_{1\otimes v_s}}\|f\|_{{M}_{v_s}^{p_1,q_1}}.
	\end{equation}	
	Vice-versa, if \eqref{stimaA} holds for $s=0$ and for every $f\in\cS(\rd)$, $\sigma\in \cS'(\rdd)$, then \eqref{indicitutti} and \eqref{indiceq} must be satisfied. 
\end{theorem}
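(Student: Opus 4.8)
The plan is to derive the boundedness of the Weyl operator $L_\sigma$ directly from the duality formula \eqref{equiv1}, $\la L_\sigma f,g\ra=\la \sigma,W(g,f)\ra$, combined with the sharp Wigner estimates of Theorem \ref{T1}. First I would recall that $M^{p_2,q_2}_{v_s}$ has dual space (up to the natural pairing) equal to $M^{p_2',q_2'}_{v_{-s}}$, so that estimating $\|L_\sigma f\|_{M^{p_2,q_2}_{v_s}}$ reduces to bounding $|\la \sigma, W(g,f)\ra|$ uniformly over $g$ in the unit ball of $M^{p_2',q_2'}_{v_{-s}}$. Applying H\"older's inequality to the pairing $\la \sigma, W(g,f)\ra$ between $M^{p,q}_{1\otimes v_s}$ and its dual $M^{p',q'}_{1\otimes v_{-s}}$ gives
\[
|\la \sigma, W(g,f)\ra|\lesssim \|\sigma\|_{M^{p,q}_{1\otimes v_s}}\,\|W(g,f)\|_{M^{p',q'}_{1\otimes v_{-s}}}.
\]
So the whole matter is to control $\|W(g,f)\|_{M^{p',q'}_{1\otimes v_{-s}}}$ by $\|g\|_{M^{p_2',q_2'}_{v_s}}\|f\|_{M^{p_1,q_1}_{v_s}}$, which is exactly an instance of the sufficient direction (Theorem \ref{wigestp}, or Theorem \ref{wigestpbis} for $s\geq0$) applied with the target indices $(p',q')$ and input indices $(p_2',q_2')$ for $g$ and $(p_1,q_1)$ for $f$.

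The next step is the bookkeeping that identifies the Wigner hypotheses \eqref{WIR}, \eqref{Wigindexsharp} for this particular choice of exponents with the stated conditions \eqref{indicitutti} and \eqref{indiceq}. Condition \eqref{WIR} for the Wigner estimate requires that each input index be $\leq q'$, i.e.\ $p_2',q_2',p_1,q_1\leq q'$; taking complements turns this precisely into \eqref{indiceq}, namely $q\leq\min\{p_1',q_1',p_2,q_2\}$. The index inequalities \eqref{Wigindexsharp}, written as $\frac1{p_2'}+\frac1{p_1}\geq\frac1{p'}+\frac1{q'}$ and $\frac1{q_2'}+\frac1{q_1}\geq\frac1{p'}+\frac1{q'}$, are exactly \eqref{indicitutti} after rewriting $1/p_2'=1-1/p_2$ and taking the minimum of the two resulting constraints. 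I would carry out this translation carefully to make sure the roles of $f$ and $g$ (and the corresponding complemented indices) are matched correctly to the two slots of $W(g,f)$.

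For the converse, the strategy is to run the implication backwards: assuming \eqref{stimaA} holds for all $f\in\cS$, $\sigma\in\cS'$ with $s=0$, I would test it against the same pairing \eqref{equiv1} to recover a bound of the form \eqref{Wigestsharp} for the cross-Wigner distribution $W(g,f)$ with the indices $(p',q')$, $(p_2',q_2')$, $(p_1,q_1)$, and then invoke the necessity part of Theorem \ref{T1} to conclude that the corresponding \eqref{WIR} and \eqref{Wigindexsharp} must hold, which retranslate into \eqref{indicitutti} and \eqref{indiceq}. Concretely, for fixed $g,f\in\cS$ one chooses $\sigma$ to essentially realize the pairing $\la\sigma,W(g,f)\ra$ near its norm (taking $\sigma$ in the unit ball of $M^{p,q}$ dualizing $W(g,f)\in M^{p',q'}$), so that boundedness of $L_\sigma$ forces the Wigner bound. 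The main obstacle I expect is precisely this duality bookkeeping: verifying that the dual pairings are the standard ones on the weighted modulation spaces (so that $(M^{p,q}_{m})'=M^{p',q'}_{1/m}$ under the $L^2$ pairing, valid for $p,q<\infty$), and handling the endpoint cases $p=\infty$ or $q=\infty$ where duality must be replaced by a direct argument or by the inclusion relations \eqref{modspaceincl1}; the algebraic identification of the index conditions, though tedious, is routine once the correct exponents are assigned to the two arguments of $W(g,f)$.
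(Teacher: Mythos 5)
Your sufficiency argument coincides with the paper's: the paper likewise fixes $g$ in the dual space $M^{p_2',q_2'}_{v_{-s}}(\rd)$, applies Theorem \ref{T1} with target indices $(p',q')$ to get $W(g,f)\in M^{p',q'}_{1\otimes v_{-s}}(\rdd)$, bounds $|\la\sigma,W(g,f)\ra|$ by H\"older, and performs exactly the index translation you describe (your identification of \eqref{WIR}, \eqref{Wigindexsharp} with \eqref{indiceq}, \eqref{indicitutti} is correct). Where you genuinely diverge is the converse: the paper does not prove it at all, but simply cites \cite[Theorem 1.1]{cordero3}, whereas you derive it internally by dualizing \eqref{stimaA} to recover a Wigner bound of the form \eqref{Wigestsharp} with indices $(p',q')$, $(p_2',q_2')$, $(p_1,q_1)$ and then invoking the necessity half of Theorem \ref{T1}. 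This is a legitimate and arguably preferable route, since it makes the theorem self-contained and exploits precisely the sharpness result the paper establishes; its cost is the duality bookkeeping you flag: at endpoint exponents the identification $(M^{p,q}_m)'=M^{p',q'}_{1/m}$ is unavailable, and to realize $\|W(g,f)\|_{M^{p',q'}}$ as a supremum of pairings one should construct test symbols $\sigma=V_\Phi^*G$ with $G$ near-extremal in the mixed-norm Lebesgue duality --- standard, but it must be written out, and the paper's citation sidesteps it entirely. Two corrections to your weight bookkeeping in the sufficiency part: the estimate you need is $\|W(g,f)\|_{M^{p',q'}_{1\otimes v_{-s}}}\lesssim\|f\|_{M^{p_1,q_1}_{v_s}}\|g\|_{M^{p_2',q_2'}_{v_{-s}}}$, with weight $v_{-s}$ (not $v_s$) on $g$, since $g$ runs over the unit ball of the dual; a bound in terms of $\|g\|_{M^{p_2',q_2'}_{v_s}}$ is strictly weaker for $s\geq 0$ and would only place $L_\sigma f$ in $M^{p_2,q_2}_{v_{-s}}$. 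Moreover, since Theorem \ref{T1} puts the weight $v_{|s|}$ on the \emph{first} slot and $v_s$ on the second, you should apply it with parameter $-s$ to $W(f,g)$ and then use $W(g,f)=\overline{W(f,g)}$, which leaves the $M^{p',q'}_{1\otimes v_{-s}}$ norm unchanged; the paper glosses over this conjugation step as well.
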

\begin{proof} Assume $\sigma \in M^{p,q}_{1\otimes v_s}(\R^{2d})$, $f\in {M}^{p_1,q_1}_{v_s}(\R^d)$ such that \eqref{indicitutti} and \eqref{indiceq} are satisfied. For $g\in {M}^{p'_2,q'_2}_{v_{-s}}(\R^d)$ Theorem \ref{T1} says that the cross-Wigner distribution is in $M^{p',q'}_{1\otimes v_{-s}}(\R^{2d})$, provided that $p_1,q_1,p'_2,q'_2\leq q'$ and $$\min\{1/p_1+1/p'_2, 1/q_1+1/q_1'\}\geq 1/p'+1/q',$$ that are  conditions \eqref{indiceq} and \eqref{indicitutti}, respectively. Thereby there exists a constant $C>0$ such that
	\begin{align*}
	|\la \sigma, W(g,f)\ra|&\leq \|a\|_{M^{p,q}_{1\otimes v_s}(\R^{2d})}\|W(g,f)\|_{M^{p',q'}_{1\otimes v_{-s}}(\R^{2d})}\\
	&\leq C \|f\|_{{M}^{p_1,q_1}_{v_s}(\R^d)} \|g\|_{{M}^{p'_2,q'_2}_{v_{-s}}(\R^d)}.
	\end{align*}
Since $|\la L_\sigma f,g\ra|=|\la \sigma, W(g,f)\ra|$, this concludes the proof of the sufficient conditions. The necessary conditions are proved in \cite[Theorem 1.1]{cordero3}.
\end{proof}

We now present sharp boundedness results for localization operators. 

Let us mention that, since their introduction by Daubechies \cite{Daube88} as a mathematical tool to localize a signal in the time-frequency plane, they have been investigated by many authors in the field of signal analysis, see \cite{BCG2004,CG02,Fei-Now02,RT93,toft1,Wong02,Nenadmed2016} and references therein. Localization operators with Gaussian windows are well-known in quantum mechanics, under the name of anti-Wick operators \cite{Berezin71,Shubin91}. 

A localization operator  $\aaf $ with symbol $a$ and
windows $\f _1, \f _2$ is defined   as
\begin{equation}
\label{eqi4}
\aaf f(t)=\int_{\Renn}a \phas V_{\f _1}f \phas M_\o T_x \f _2 (t)
\,
dx d\o \, .
\end{equation}
In signal analysis the meaning is as follows: first, analyse the signal $f$ by taking the STFT $V_{\f_1}f $, then localize $f$ by multiplying with the symbol $a$ (if in particular $a = \chi _{\Omega }$, for some compact set $\Omega \subseteq \rdd
$,   it is considered only the part of $f$
that lives on the set $\Omega $ in the \tf\ plane), then reconstruct the signal by superposition of \tfs s  with respect to the window $\f_2$.
 If $\f _1(t)  = \f _2 (t) = e^{-\pi t^2}$, then $A_a = \aaf $ is the
classical \aw\ operator and the mapping $a \to \aaf  $ is interpreted
as a quantization rule~\cite{Berezin71,Shubin91,Wong02}.

Rewriting a localization operator $\aaf$  as a Weyl operator, cf. \eqref{WA}, 
we can investigate boundedness properties for localization operators as boundedness conditions for Weyl operators having symbols $a\ast W(\f_2,\f_1)$, see \eqref{eq2}. Again it comes into play Theorem \ref{T1}.
\begin{theorem}\label{tempbound}
	Assume $s\geq0$, the indices  $p_i,q_i,p,q\in [1,\infty]$, $i=1,2$, fulfil the relations \eqref{indicitutti} and \eqref{indiceq}. Consider $r\in [1,2]$. If $a\in  M^{p,q}_{1/v_{-s}}(\Renn)$ and $\f_1,\f_2\in
	M^{r}_{v_{2s}}(\Ren)$,  then the localization operator $\gaw$ is continuous from  $M^{p_1,q_1}_{v_s}(\Ren)$  to $M^{p_2,q_2}_{v_s}(\Ren)$ with
	$$\|\gaw\|_{op}\lesssim  \|a\|_{M^{p,q}_{1/v_{-s}}}\|\f_1\|_
	{M^{r}_{2s}}\|\f_2\|_{ M^{r}_{v_{2s}}}.$$
\end{theorem}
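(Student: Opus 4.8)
The plan is to reduce the assertion to the boundedness of a Weyl operator and then to quote Theorem \ref{Charpseudo}. By the identity \eqref{WA} the localization operator factors as $\gaw=L_\sigma$ with Weyl symbol $\sigma=a\ast W(\f_2,\f_1)$, cf. \eqref{eq2}. Since the indices $p_i,q_i,p,q$ are assumed to satisfy \eqref{indicitutti} and \eqref{indiceq}, Theorem \ref{Charpseudo} already grants that $L_\sigma$ is continuous from $M^{p_1,q_1}_{v_s}(\Ren)$ to $M^{p_2,q_2}_{v_s}(\Ren)$, with the quantitative bound \eqref{stimaA}, \emph{provided} one knows that $\sigma\in M^{p,q}_{1\otimes v_s}(\Renn)$. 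Hence the whole proof collapses to the single convolution estimate
\[
\|a\ast W(\f_2,\f_1)\|_{M^{p,q}_{1\otimes v_s}}\lesssim \|a\|_{M^{p,q}_{1/v_{-s}}}\,\|\f_1\|_{M^{r}_{v_{2s}}}\|\f_2\|_{M^{r}_{v_{2s}}},
\]
after which $\|\gaw\|_{op}\lesssim\|\sigma\|_{M^{p,q}_{1\otimes v_s}}$ together with \eqref{stimaA} yields the claim.

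This estimate splits into two independent steps. First I would control the cross-Wigner factor: applying Theorem \ref{T1}, and more precisely its weighted refinement Theorem \ref{wigestpbis} since $s\geq0$, with target indices $p=1$, $q=\infty$ and both windows $\f_1,\f_2\in M^{r}_{v_{2s}}(\Ren)$, the hypotheses \eqref{WIR} and \eqref{Wigindexsharp} reduce to $2/r\geq 1/1+1/\infty=1$, that is, exactly to the standing assumption $r\in[1,2]$. This produces $W(\f_2,\f_1)\in M^{1,\infty}_{1\otimes v_{2s}}(\Renn)$ with
\[
\|W(\f_2,\f_1)\|_{M^{1,\infty}_{1\otimes v_{2s}}}\lesssim \|\f_1\|_{M^{r}_{v_{2s}}}\|\f_2\|_{M^{r}_{v_{2s}}},
\]
where the symmetric product on the right is precisely what Theorem \ref{wigestpbis} delivers (and what the merely $v_{|s|}$-weighted Theorem \ref{wigestp} would not).

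Second, I would absorb this factor into the symbol by a weighted Young-type convolution inequality. Concretely I would specialise Proposition \ref{mconvmp} to $\Renn$ (i.e.\ with $d$ replaced by $2d$), choosing the two integrability pairs to be $(p,q)$ for the symbol and $(1,\infty)$ for $W(\f_2,\f_1)$, so that the output pair is again $(p,q)$, and selecting the auxiliary weight $\nu$ so that the frequency weight on $a$ (note $M^{p,q}_{1/v_{-s}}\subseteq M^{p,q}_{1\otimes v_s}$ by \eqref{modspaceincl1}, since $s\geq0$) together with the weight $1\otimes v_{2s}$ on $W(\f_2,\f_1)$ matches the source pattern $1\otimes\nu$, $1\otimes v_{|t|}\nu^{-1}$ and yields a full target weight $v_t$ dominating the required $1\otimes v_s$. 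The admissibility of this choice rests on the $v_s$-moderateness of the polynomial weights and on $s\geq0$.

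The main obstacle I anticipate is precisely the bookkeeping of the weights in this last step: one has to verify that the phase-space weight $1/v_{-s}$ carried by $a$, combined with the weight $v_{2s}$ carried by $W(\f_2,\f_1)$ through its quadratic dependence on the windows, is exactly enough to produce the frequency weight $1\otimes v_s$ demanded by Theorem \ref{Charpseudo}, with no loss in the exponents when passing through Proposition \ref{mconvmp}. A secondary point is the compatibility of the two sets of hypotheses: the index conditions \eqref{indicitutti}--\eqref{indiceq}, needed to run Theorem \ref{Charpseudo}, must coexist with the constraint $r\in[1,2]$ coming from the Wigner step, and one should check that these impose no hidden restriction beyond those already listed in the statement.
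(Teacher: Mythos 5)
Your proposal is correct and follows essentially the same route as the paper's proof: rewrite $\gaw$ as the Weyl operator with symbol $\sigma=a\ast W(\f_2,\f_1)$, apply Theorem \ref{T1} with $p_i=q_i=r\in[1,2]$ and target $(1,\infty)$ to place $W(\f_2,\f_1)$ in $M^{1,\infty}_{1\otimes v_{2s}}(\Renn)$, absorb it via Proposition \ref{mconvmp} to get $\sigma\in M^{p,q}_{1\otimes v_s}(\Renn)$, and conclude by Theorem \ref{Charpseudo}. The only cosmetic differences are the choice of auxiliary weight in Proposition \ref{mconvmp} (the paper takes $\nu=v_{-s}$, starting from the weight $1\otimes v_{-s}$ on $a$), and your appeal to Theorem \ref{wigestpbis}, which is unnecessary: since $2s\geq 0$ one has $v_{|2s|}=v_{2s}$, so Theorem \ref{wigestp} already delivers the symmetric product $\|\f_1\|_{M^r_{v_{2s}}}\|\f_2\|_{M^r_{v_{2s}}}$.
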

\begin{proof}
Using Theorem \ref{T1} for $\f_1,\f_2\in
M^{r}_{v_{2s}}(\Ren)$ we obtain that $W(\f_2, \f_1)\in M^{1,\infty}_{1\otimes v_{2s}}$, for every $r\in [1,2]$. Now   the  convolution relations in Proposition \ref{mconvmp}, in the form $M^{p,q} _{1\otimes v_{-s}} \ast M^{1,\infty}_{1\otimes v_{2s}} \subseteq M^{p , q}_{1\otimes v_s}$, yield that the Weyl symbol $\sigma=a\ast W(\f_2,\f_1)$ belongs to $M^{p, q}_{1\otimes v_s} $. The result now
	follows from  Theorem \ref{Charpseudo}.
\end{proof}
\begin{remark}
(i)	The previous result extends Theorem 3.2 in \cite{CG02} and Theorem 4.11 in \cite{Toftweight} for this particular choice of weights. We observe that further extensions of Theorem \ref{tempbound} can be considered by using more general polynomial weights satisfying condition $(4.17)$  in \cite{Toftweight}.  \\
\noindent
(ii) Using the same techniques as in the proof Theorem \ref{tempbound} one can study 
conditions on symbols and window functions such that the operator $\aaf$ is in the Schatten class $S^p$, cf., e.g. \cite[Theorem 3.4]{CG02}.
\end{remark}

\section{Further applications: The Cohen Class}\label{Cohensection}
The Cohen class \eqref{Cohenclass} was  introduced by
Cohen in \cite{Cohen1} essentially to circumvent the problem of the lack of positivity of the Wigner distribution. 
Many different kinds of kernels were proposed, in particular we recall for $\tau\in [0,1]\setminus\{1/2\}$ 
the $\tau$-kernels
\begin{equation*}
\sigma _{\tau }(x,\o )=\frac{2^{d}}{|2\tau -1|^{d}}e^{2\pi i\frac{2}{%
		2\tau -1}x\o },
\end{equation*}%
which provide the  $\tau $-Wigner
distributions  $W_{\tau }(f,f)$ \cite[Proposition 5.6]{bogetal}:
\begin{equation*}
W_{\tau }(f,f)=W(f,f)\ast \sigma _{\tau }.
\end{equation*}
We recall that such distributions can be used in the definition of the $\tau$-pseudodifferential operators, see e.g., \cite{bogetal,toft1}.
Another important  kernel is the Cohen kernel $\Theta
_{\sigma }$, which yields the Born-Jordan distribution \cite{BJinv,CDNACHA2016,CGNtfa}, given by \cite[Prop. 3.4]{CGNtfa}
 \begin{equation*}
\Theta _{\sigma }(\zeta _{1},\zeta _{2})=
\begin{cases}
-2\,\mathrm{Ci}(4\pi |\zeta _{1}\zeta _{2}|),\quad (\zeta _{1},\zeta
_{2})\in \mathbb{R}^{2},\,d=1 \\ 
\mathcal{F}(\chi _{\{|s|\geq 2\}}|s|^{d-2})(\zeta _{1},\zeta _{2}),\quad
(\zeta _{1}\zeta _{2})\in {\mathbb{R}^{2d}},\,d\geq 2,%
\end{cases}
\end{equation*}
 where $\mathrm{Ci}(t)$  is the cosine integral function. It was shown in \cite[Sec. 4]{CGNtfa} that
 $\sigma _{\tau }$,  $\tau\in [0,1]\setminus\{1/2\}$, and  $\Theta _{\sigma }$ belong to the modulation space $M^{1,\infty}(\rdd)$. Inspired by this result, Theorem \ref{cohenbound} shows continuity properties for elements of the Cohen class having kernels in the modulation space $M^{1,\infty}(\rdd)$. Let us prove Theorem \ref{cohenbound}. The main ingredient will be Theorem \ref{T1}.
 \par

\begin{proof}[Proof of Theorem \protect\ref{cohenbound}.]
If $f\in M^{p_1,q_1}_{v_s}(\rd)$, with $p_1,q_1$ satisfying \eqref{Ch},  Theorem \ref{T1} gives  that the Wigner distribution is in the corresponding $M^{p,q}_{1\otimes v_{s}}(\R^{2d})$. Then the result follows by the inclusion relation $M^{p,q} _{1\otimes v_{s}} \ast M^{1,\infty} \subseteq M^{p , q}_{1\otimes v_s}$, $s\geq 0$ (see Prop. \ref{mconvmp}).
\end{proof} 

Observe that the indices' assumptions of Theorem \ref{cohenbound} coincide with those of Corollary \ref{Wignerdistr}.
Hence the continuity properties on modulation spaces of these Cohen kernels coincide with those of the Wigner distribution. In other words, the time-frequency properties of these Cohen distributions resemble those of the Wigner distribution.

\section*{Acknowledgements}

The first author was partially supported by the Italian
Local Project \textquotedblleft Analisi di Fourier per equazioni alle
derivate parziali ed operatori pseudo-differenziali", funded by the
University of Torino, 2013.


\begin{thebibliography}{10}
\bibitem {BP61} A. Benedek and R. Panzone. The space $L^p$ with mixed-norm.
{\it Duke Math. J.}, 28:301--324, 1961.
\bibitem{benyi} A. B\'enyi, K. Gr\"ochenig,
 K.A. Okoudjou and L.G. Rogers.
 Unimodular Fourier multipliers for
 modulation spaces. {\it J. Funct. Anal.},
246(2): 366-384, 2007.
\bibitem{benjiKasso}
\'A.~B\'enyi and K.~A.~Okoudjou.
\newblock Bilinear pseudodifferential operators on modulation spaces.
\newblock {\it J. Fourier Anal. Appl.}, {100} (3):301--313, 2004.
\bibitem{benyi2} A. B\'enyi and K.A.
Okoudjou. Time-frequency
estimates for
pseudodifferential operators.
{\it Contemporary Math.},
Amer. Math. Soc., 428:13--22,
2007.
\bibitem{Berezin71}
F.~A. Berezin.
\newblock Wick and anti-{W}ick symbols of operators.
\newblock {\em Mat. Sb. (N.S.)}, 86(128):578--610, 1971.

\bibitem{BCG2004} P.~Boggiatto, E.~Cordero, and K.~Gr\"ochenig. Generalized {A}nti-{W}ick operators with symbols in distributional {S}obolev spaces. {\em Integral Equations and Operator Theory}, 48(4):427--442, 2004.

%

\bibitem {bogetal}P. Boggiatto, G. De Donno and A. Oliaro. Time-frequency
representations of Wigner type and pseudo-differential operators. {\it Trans. Amer.
Math. Soc.}, 362(9):4955--4981, 2010.
\bibitem{Cohen1} L. Cohen. Generalized phase-space distribution functions.
{\it J. Math. Phys.}, 7:781--786, 1966.
\bibitem{Cohen2} L. Cohen. {\it Time Frequency Analysis: Theory and Applications}.
Prentice Hall, 1995.
\bibitem{BJinv} E. Cordero, M. de Gosson and F. Nicola. On the Invertibility of Born-Jordan Quantization. {\em J. Math. Pures Appl.},  105(4):537--557, 2016. 
\bibitem{CDNACHA2016} E. Cordero, M. de Gosson and F. Nicola. On the reduction of the interferences in the Born-Jordan distribution. {\em Appl. Comput. Harmon. Anal.}, In Press. arXiv:1601.03719
\bibitem{CGNtfa} E. Cordero, M. de Gosson and F. Nicola. Time-frequency Analysis of Born-Jordan Pseudodifferential Operators. {\em Submitted}.  ArXiv:1601.05303.
\bibitem{CG02} E.~Cordero and K.~Gr\"ochenig.
\newblock Time-frequency analysis of localization operators.
\newblock {\em J. Funct. Anal.}, 205 (1):107--131, 2003.

\bibitem{cordero1} E. Cordero and F. Nicola. Sharp continuity results for the short-time Fourier transform and for localization operators. {\it Monatsh. Math.}, 162:251--276, 2011.

\bibitem{cordero2} E. Cordero and F. Nicola. Pseudodifferential operators on $L^p$, Wiener amalgam and modulation spaces. {\em  Int. Math. Res. Notices}, 10:1860--1893, 2010.
\bibitem{COmultiloc} E.~Cordero and K. Okoudjou. Multilinear localization operators. \textit{ J. Math. Anal. Appl.}, 325 (2):1103-1116, 2007.

\bibitem{cordero3} E. Cordero, A. Tabacco and  P. Wahlberg. Schr\"odinger-type propagators, pseudodifferential operators and modulation spaces. {\it J. London Math. Soc.}, 88(2):375--395, 2013.
\bibitem{CTWJFA2014} E. Cordero,  J. Toft and P. Wahlberg. Sharp results for the Weyl product on modulation spaces. {\em J. Funct. Anal.}, 267(8):3016--3057, 2014.
\bibitem{Czaja}
W. Czaja.
\newblock Boundedness of pseudodifferential operators on modulation
spaces.
\newblock{\em J. Math. Anal. Appl.}, 284(1):389--396, 2003.
\bibitem{Daube88}
I.~Daubechies.
\newblock Time-frequency localization operators: a geometric phase space
approach.
\newblock {\em IEEE Trans. Inform. Theory}, 34(4):605--612, 1988.

\bibitem {Birkbis} M. de Gosson. {\it Symplectic methods in Harmonic Analysis and in
Mathematical Physics}, Birkh\"{a}user, 2011.
\bibitem{feichtinger80} H.~G.
Feichtinger.
\newblock Banach convolution algebras of {W}iener's type,
\newblock In {\em Proc. Conf. ``Function, Series, Operators",
 Budapest August 1980}, Colloq. Math. Soc. J\'anos Bolyai, 35,  509--524, North-Holland, Amsterdam,
 1983.
\bibitem{feichtinger83}
H.~G. Feichtinger.
\newblock Banach spaces of distributions of {W}iener's type and interpolation.
\newblock In {\em Proc. Conf. Functional Analysis and Approximation,
 Oberwolfach August 1980},  Internat. Ser. Numer. Math., 69:153--165. Birkh\"auser, Boston, 1981.
\bibitem{feichtinger90}
H.~G. Feichtinger.
\newblock Generalized amalgams, with applications to {F}ourier transform.
\newblock {\em Canad. J. Math.}, 42(3):395--409, 1990.
\bibitem{Fei-Now02}
H.~G. Feichtinger and K.~Nowak.
\newblock A {F}irst {S}urvey of {G}abor {M}ultipliers.
\newblock In H.~G. Feichtinger and T.~Strohmer, editors, {\em Advances in
	{G}abor Analysis}. Birkh{\"a}user, Boston, 2002.
\bibitem{Galperin2014}
Y. V. Galperin.
\newblock { Young's Convolution Inequalities for weighted mixed (quasi-) norm spaces}. {\em J. Ineq. Spec. Funct.}, 5(1):1--12, 2014.


\bibitem{GT2002} L. Grafakos and R. H. Torres. Multilinear Calder\'on-Zygmund theory, {\em  Adv. Math.}, 165(1): 124--164, 2002. 
\bibitem{book}
K.~Gr{\"o}chenig.
\newblock {\em Foundations of Time-Frequency Analysis}.
\newblock Birkh\"auser, Boston, 2001.

\bibitem{GH99}
K.~Gr{\"o}chenig and C. Heil.
\newblock Modulation spaces and pseudodifferential operators.
\newblock {\em Integral Equations Operator Theory}, 34:439--457, 1999.

\bibitem{GH04}
K.~Gr{\"o}chenig and C. Heil. Counterexamples for boundedness of
pseudodifferential operators. {\it Osaka J. Math.}, 41:681--691,
2004.

\bibitem{grstrohmer} K. Gr\"ochenig and T. Strohmer. Pseudodifferential operators on locally compact abelian groups and Sj\"ostrand's symbol class. {\it J. Reine Angew. Math}., 613:121--146, 2007.

\bibitem{JPTT2015} K. Johansson, S. Pilipovi\'c, N. Teofanov and J. Toft.
Sharp convolution and multiplication estimates in weighted spaces,
{\em Analysis and Applications}, 13(5):457--480, 2015.

\bibitem{KobSug2011} M. Kobayashi and M. Sugimoto. 
\newblock The inclusion relation between Sobolev and modulation spaces.  {\em J. Funct. Anal.}, 260(11):3189--3208, 2011.

\bibitem{Labate1}
D. Labate.
Time-frequency analysis of pseudodifferential operators.
\textit{Monatshefte Math.}, 133:143--156, 2001.

\bibitem{Labate2}
D. Labate.
Pseudodifferential operators on modulation spaces.
\textit{J. Math. Anal. Appl.}, 262:242--255, 2001.
\bibitem{Lieb}
L. H. Lieb.
\newblock {Integral bounds for radar ambiguity functions and Wigner distributions}.
\newblock {\it  J. Math. Phys.}, 31, No.3, 1990.

\bibitem{MOP2016} S. Molahajloo, K. A. Okoudjou and  G. E. Pfander. Boundedness of Multilinear Pseudo-differential Operators on Modulation Spaces. {\em J. Fourier Anal. Appl.}, 1--35, 2016. DOI:10.1007/s00041-016-9461-2
\bibitem{ptt2}  S. Pilipovi\'c, N. Teofanov and J. Toft. Micro-local analysis in Fourier-Lebesgue and modulation spaces. Part II, {\it J. Pseudo-Differ. Oper. Appl.}, 1:341--376, 2010.

\bibitem{ptt1} S. Pilipovi\'c, N. Teofanov and J. Toft. Micro-Local Analysis in Fourier-Lebesgue and modulation Spaces. Part I, {\it J. Fourier Anal. Appl.}, 17:374--407, 2011.

\bibitem{RT93}
J.~Ramanathan and P.~Topiwala.
\newblock Time-frequency localization via the {W}eyl correspondence.
\newblock {\em SIAM J. Math. Anal.}, 24(5):1378--1393, 1993.

\bibitem{Shubin91}
M.~A. Shubin.
\newblock {\em Pseudodifferential Operators and Spectral Theory}.
\newblock Springer-Verlag, Berlin, second edition, 2001.
\newblock Translated from the 1978 Russian original by Stig I. Andersson.

\bibitem{Sjostrand1} 
J. Sj\"ostrand. An algebra of pseudodifferential operators. {\em Math. Res. Lett.},  1(2):185--192, 1994.

\bibitem{strohmer} T. Strohmer, Pseudodifferential operators and Banach algebras in mobile communications. {\it Appl. Comput. Harmon. Anal.}, 20(2):237--249, 2006. 

\bibitem{sugitomita2}
M. Sugimoto and N. Tomita.
\newblock {{B}oundedness properties of pseudo-differential operators and {C}alder\`on-{Z}ygmund operators on
	modulation spaces}.
\newblock {\em J. Fourier Anal. Appl.}, {14}(1):124--143, 2008.

\bibitem{Tachizawa1}
K. Tachizawa. The boundedness of pseudo-differential operators on modulation spaces.
\emph{Math. Nachr.}, 168:263--277, 1994.
\bibitem{Nenadmed2016} N. Teofanov. Continuity and Schatten–von Neumann Properties for Localization Operators on Modulation Spaces. \newblock {\em Mediterr. J. Math.} 13:745--758, 2016. 
\bibitem{toft1}
J.~Toft.
\newblock Continuity properties for modulation spaces, with applications to
  pseudo-differential calculus. {I}.
\newblock {\em J. Funct. Anal.}, 207(2):399--429, 2004.
\bibitem{Toftweight}
J.~Toft.
\newblock Continuity properties for modulation spaces, with applications
              to pseudo-differential calculus. {II}.
\newblock {\em Ann. Global Anal. Geom.}, 26(1):73--106, 2004.
\bibitem{Ville}  J. Ville. Th\'{e}orie et Applications de la Notion de Signal Analytique, C\^{a}bles et Transmission, 2:61--74, 1948.
\bibitem{Wigner32} E. Wigner. On the Quantum Correction for Thermodynamic Equilibrium. {\it Physical Review} 40:749--759, 1932.
\bibitem{Wong02}
M.~W. Wong.
\newblock {\em Wavelets Transforms and Localization Operators}, volume 136 of
{ Operator Theory Advances and Applications}.
\newblock Birkhauser, 2002.
\end{thebibliography}
\end{document}